\theoremstyle{plain} %--default
\newtheorem{theorem}    {Theorem}[section] 
\newtheorem{lemma}      [theorem]{Lemma}
\newtheorem{corollary}  [theorem]{Corollary}
\newtheorem{question}   [theorem]{Question}
\newtheorem*{thm}    {Theorem}
\newtheorem*{thm2}    {Theorem 1.2}
\newtheorem*{thm5}    {Theorem 1.5}
\theoremstyle{definition}
\theoremstyle{remark}
\newtheorem{remark}              {Remark}
\numberwithin{equation}{section}
\def\A{\mathbb A}
\def\C{\mathbb C}
\def\N{\mathbb N}
\def\Q{\mathbb Q}
\begin{document}

\title[On the occurrence of Hecke eigenvalues]{On the Occurrence of Hecke Eigenvalues and a Lacunarity Question of Serre}

\author{Nahid Walji}

\maketitle
\begin{abstract}
Let $\pi$ be a unitary cuspidal automorphic representation for GL(n) over a number field. We establish upper bounds on the number of Hecke eigenvalues of $\pi$ equal to a fixed complex number. For GL(2), we also determine upper bounds on the number of Hecke eigenvalues with absolute value equal to a fixed number $\gamma$; in the case $\gamma = 0$, this answers a question of Serre. These bounds are then improved upon by restricting to non-dihedral representations. Finally, we obtain analogous bounds for a family of cuspidal automorphic representations for GL(3).
\end{abstract}

\section{Introduction}

Let $n$ be a positive integer, $F$ a number field, and let $\mathcal{A}_0({\rm GL}_n(\A_F) )$ be the set of unitary cuspidal automorphic representations for ${\rm GL}_n(\A_F)$.
Denote by $v$ a finite place of $F$ at which $\pi \in \mathcal{A}_0 ({\rm GL}_n(\A_F) )$ is unramified. Associated to such a place $v$ is the Langlands conjugacy class $A_v (\pi)$, which one can represent by a diagonal matrix of Satake parameters $\{\alpha _{1,v}, \dots , \alpha _{n,v}\}$. We will denote the trace of such a matrix as $a_v (\pi)$.

We begin with the following question: if we fix a non-negative number $\gamma$, what can be said about the density of the set $\{v \mid |a_v(\pi)| \neq \gamma\}$?

It is instructive to know what is predicted. For $\gamma = 0$, Serre raised the
following question:

\begin{question}[Serre] \label{conja}
Let $\pi \in \mathcal{A}_0({\rm GL}_n(\A_F))$ and consider the set of places $v$ at which the Hecke eigenvalue is zero. Is the density of this set, if it exists, bounded above by $1-1/n^2$?
\end{question}

This originated in \textit{Probl\`eme} on page 372, Section 6 of~\cite{Se81}. In that section, Serre considers the lacunarity of a Dirichlet series associated to an $r$-dimensional irreducible $\ell$-adic Galois representation over a number field. He proves that there exists an upper bound of $1- 1/r^2$ for the density of places at which the associated trace of Frobenius is zero, and shows that such a bound is sharp. He ends the section by asking whether the same bounds might hold on the automorphic side, adding that he expects these to follow from the conjectures of Langlands associating a reductive group $H(\pi)$ to a given automorphic representation $\pi$.

We will address this Question and show that the bound holds in various cases.
We actually establish an upper bound on the upper Dirichlet density, so it will not matter if the set does not have a density.

We will show that, for each $n$, the Ramanujan conjecture for GL(n) implies a positive answer to Question~\ref{conja} for GL(n). We also explain why the bound in Question~\ref{conja}, if true, would be sharp. This is due to an application of a theorem of Arthur--Clozel to determine the automorphy of the examples of Galois representations from~\cite{Se81}.

We obtain unconditional results for Question~\ref{conja} when $n = 2$, as well as for a family of automorphic representations when $n = 3$. 
Note that a cuspidal automorphic representation $\pi$ can be twisted by some $|\cdot|^t \circ {\rm det}$, for a suitable real number $t$, in order to be made unitary. So we can restrict to considering unitary cuspidal automorphic representations.
Recall that an automorphic representation $\Pi$ for ${\rm GL}_n(\A_F)$ is called \textit{essentially self-dual} if there exists a Hecke character $\chi$ such that $\Pi \simeq \widetilde{\Pi} \otimes \chi$.
\begin{theorem}\label{thmb}
Let $\pi \in \mathcal{A}_0({\rm GL}_n(\A_F))$, where $n = 2$ or where $\pi$ is essentially self-dual and $n = 3$.
Then the upper density of places $v$ at which the Hecke eigenvalue is zero is bounded above by $1-1/n^2$, where $n = 2$ or $3$, respectively.
\end{theorem}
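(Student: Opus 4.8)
The plan is to exploit the relationship between the vanishing of $a_v(\pi)$ and the size of the Dirichlet coefficients of the Rankin–Selberg $L$-function $L(s,\pi\times\tilde\pi)$, together with the standard fact that this $L$-function has a simple pole at $s=1$ while $L(s,\pi\times\tilde\pi\times\cdots)$-type products have controlled analytic behavior. Concretely, for $n=2$ write the Satake parameters at an unramified $v$ as $\{\alpha_v,\beta_v\}$, so $a_v(\pi)=\alpha_v+\beta_v$. The Rankin–Selberg coefficient at $v$ is (up to the usual local factor) $|a_v(\pi)|^2$, and summing $\sum_{v}|a_v(\pi)|^2 \log(Nv)/(Nv)^s$ has a simple pole at $s=1$ of residue $1$, by the work of Jacquet–Shalika and the known analytic properties of $L(s,\pi\times\tilde\pi)$ (isobaric decomposition gives $L(s,\pi\times\tilde\pi)=\zeta_F(s)L(s,{\rm Ad}\,\pi)$ with $L(s,{\rm Ad}\,\pi)$ entire and nonvanishing at $s=1$). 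By Wiener–Ikehara / a Tauberian argument this says that on average $|a_v(\pi)|^2$ has mean value $1$. If the set $S=\{v : a_v(\pi)=0\}$ had upper density $\delta$, then the complementary set contributes the full mean $1$ while having density $1-\delta$, so we need a \emph{lower} bound on $|a_v(\pi)|^2$ off $S$ that is incompatible with $\delta>3/4$ — but pointwise $|a_v(\pi)|^2$ can be arbitrarily small, so a second moment is needed.

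The key step, then, is to bring in the fourth moment, i.e. the symmetric-power or Rankin–Selberg $L$-functions of higher degree. For $n=2$, Gelbart–Jacquet gives ${\rm Sym}^2\pi$ as an automorphic representation of ${\rm GL}_3$, and Kim–Shahidi gives ${\rm Sym}^3\pi$, ${\rm Sym}^4\pi$; one knows the analytic properties (poles, nonvanishing at the edge) of all the relevant Rankin–Selberg convolutions among $\pi,{\rm Sym}^2\pi,\dots$ From these one extracts the mean values of $|a_v(\pi)|^2$, $|a_v(\pi)|^4$, $\ldots$ over unramified $v$. Then, on the set $S^c$ of positive density $1-\delta$ where $a_v(\pi)\neq 0$, apply the Cauchy–Schwarz (or power-mean) inequality in the form
\[
\Big(\sum_{v\le x,\ v\notin S}|a_v(\pi)|^2\Big)^2 \;\le\; \Big(\sum_{v\le x,\ v\notin S}1\Big)\Big(\sum_{v\le x,\ v\notin S}|a_v(\pi)|^4\Big),
\]
and let $x\to\infty$. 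Dividing by the appropriate $\pi(x)$-normalizations turns this into an inequality among the three mean values and $1-\delta$: schematically $(\text{mean of }|a_v|^2)^2 \le (1-\delta)\cdot(\text{mean of }|a_v|^4)$, which rearranges to $\delta \le 1 - (M_2^2/M_4)$. The Ramanujan-type bookkeeping for ${\rm GL}_2$ — using the explicit decomposition $a_v(\pi)^2 = a_v({\rm Sym}^2\pi) + \omega_v$ and $|a_v(\pi)|^4$ in terms of ${\rm Sym}^2$, ${\rm Sym}^4$ coefficients, together with their mean values — should yield exactly $M_2^2/M_4 = 1/4$, giving $\delta\le 3/4 = 1-1/2^2$.

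For the $n=3$ essentially self-dual case the strategy is identical but one replaces symmetric powers by the available functorial lifts: essential self-duality means $\pi\simeq\tilde\pi\otimes\chi$, so ${\rm Ad}\,\pi$ (or the relevant exterior/symmetric square, an automorphic representation of ${\rm GL}_6$ or its isobaric pieces by Kim's exterior square lift) is available, and one computes the second and fourth moments of $|a_v(\pi)|$ from the analytic properties of $L(s,\pi\times\tilde\pi)$, $L(s,\pi\times\tilde\pi\times\pi\times\tilde\pi)$ and their decompositions; the essential self-duality is what makes these decompositions into known automorphic pieces, hence what makes the moments computable. The target constant is $M_2^2/M_4 = 1/9$, giving $\delta \le 8/9 = 1-1/3^2$.

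The main obstacle I expect is \emph{not} the Tauberian extraction of mean values (that is standard once the $L$-functions are understood) but rather the precise moment computation: one must identify $M_4$ exactly, which requires knowing the full isobaric decomposition of the degree-$n^4$ Rankin–Selberg product $L(s,\pi\times\tilde\pi\times\pi\times\tilde\pi)$ into automorphic $L$-functions whose pole orders at $s=1$ are known. For ${\rm GL}_2$ this is classical (symmetric power $L$-functions up to ${\rm Sym}^4$, all with understood analyticity by Kim–Shahidi, plus the fact that a cuspidal ${\rm GL}_2$ representation is not isomorphic to its own Adjoint twist etc.), so $M_4$ comes out cleanly. For ${\rm GL}_3$ the essential-self-duality hypothesis is doing real work: without it the fourth-moment $L$-function need not decompose into objects with controlled poles, and one cannot pin down $M_4$; with it, Kim's exterior-square functoriality and the Rankin–Selberg theory of Jacquet–Piatetski-Shapiro–Shalika suffice. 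A secondary technical point is handling the finitely many ramified places and the archimedean factors, but these contribute zero density and only affect lower-order terms, so they can be absorbed harmlessly.
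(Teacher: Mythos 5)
Your overall strategy --- Cauchy--Schwarz against the fourth moment, with the moments read off from pole orders of Rankin--Selberg $L$-functions --- is exactly the paper's strategy (Theorem~\ref{thmc} is precisely the inequality $\delta \le 1 - M_2^2/M_4$ in disguise, phrased with Dirichlet densities so that no Tauberian theorem is needed). But there are three concrete gaps in the execution.

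First, for GL(2) you write that the moment computation ``should yield exactly $M_2^2/M_4 = 1/4$.'' This is not what happens and is not what you need: $M_4$ (the pole order of $L^T(s,\pi\times\pi\times\overline\pi\times\overline\pi)$ at $s=1$) is $2$ when $\pi$ is non-dihedral, and it can be $3$ or $4$ in the dihedral case. What the argument requires is the \emph{upper bound} $M_4 \le 4$, and establishing this in the dihedral case is non-trivial: one must invoke Ramakrishnan's cuspidality criterion for $\pi\boxtimes\overline\pi$ when $\pi = I^F_E(\nu)$, decompose $\pi\boxtimes\overline\pi \simeq 1\boxplus\chi\boxplus I^F_E(\nu/\nu^\tau)$, and then check both sub-cases ($\nu/\nu^\tau$ Galois-invariant or not) to bound the number of isobaric constituents paired with their duals. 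Your sketch never touches the dihedral case, and symmetric powers of $\pi$ alone will not organize it cleanly because ${\rm Sym}^2\pi$ fails to be cuspidal there.

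Second, for GL(3) the decisive structural input is not Kim's exterior-square functoriality (for a GL(3) representation $\Pi$ the exterior square is just $\widetilde\Pi\otimes\omega_\Pi$, which carries no new information). What the paper uses is the theorem that every essentially self-dual cuspidal $\Pi$ on GL(3) is a twisted adjoint lift, $\Pi \simeq {\rm Ad}\,\pi\otimes\eta$ for some cuspidal $\pi$ on GL(2). This reduces the fourth-moment $L$-function $L^T(s,\Pi\times\Pi\times\overline\Pi\times\overline\Pi)$ to a product of Rankin--Selberg $L$-functions of $A^4\pi = {\rm Sym}^4\pi\otimes\overline\omega^2$, ${\rm Ad}\,\pi$, and $1$, and when $\pi$ is not of solvable polyhedral type the Kim--Shahidi cuspidality results give $M_4 = 3$ (so the bound obtained is in fact $\delta\le 2/3$, stronger than the target $8/9$; your guess $M_4=9$ is off). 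Without naming this structure theorem your plan does not actually pin down $M_4$.

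Third, the moment method as you set it up does not cover the essentially self-dual $\Pi$ on GL(3) associated to a tetrahedral or octahedral $\pi$: there ${\rm Sym}^4\pi$ is not cuspidal, so the pole order computation above breaks down and there is no clean control on $M_4$. The paper handles these cases by an entirely separate argument --- the Ramanujan conjecture is a theorem for tetrahedral and octahedral forms, hence for the associated $\Pi$, and the implication ``Ramanujan $\Rightarrow$ the bound $1-1/n^2$'' (a short positivity argument with $L(s,\pi\times\overline\pi)$) finishes those cases. Your proposal, applied uniformly, would fail precisely on this subfamily.
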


For $\pi \in \mathcal{A}_0({\rm GL}_n(\A_F))$, let $T = T (\pi)$ be a finite set of places containing the places at which $\pi$ is ramified as well as the infinite places. When we have an $L$-function $L(s)$ which can be expressed in some right-half plane as an Euler product over the places of $F$, we let $L^T(s)$ denote, where convergent, the Euler product over places $v \not \in T$. Given a non-negative number $\gamma$, we define $S_\gamma := \{v \mid |a_v(\pi)|\neq \gamma\}$, and we let $\underline{\delta}(S_\gamma)$ denote the lower Dirichlet density of such a set.

In the following two theorems, the conditions mentioned will include bounds towards the Ramanujan conjecture. 
Let us define $\theta = \theta (\pi)$ to be a constant such that $|\alpha_{i,v}(\pi)| \leq Nv^\theta$ for all $i$ and for $v \not \in T$. 
Known bounds for $\pi \in \mathcal{A}_0({\rm GL}_n(\A_F))$  are $\theta = 1/2 - 1/(n ^2 + 1)$ for $n \geq 5$, and $\theta = 7/64, 5/14,$ and $9/22$ for $n = 2,3,$ and $4$, respectively~\cite{LRS99, Ki03, BB11}.

Theorem~\ref{thmb} will follow from:
\begin{theorem} \label{thmc}
Given $\pi \in \mathcal{A}_0({\rm GL}_n(\A_F))$ such that $\theta (\pi) < 1/4$, assume that $L^T(s, \pi \times \pi \times \overline{\pi} \times \overline{\pi})$ has absolutely convergent Euler product for $s > 1$ and pole of order $m$ at $s=1$. Then 
\begin{align*}
\underline{\delta} (S_\gamma ) \geq 1- \frac{m-1 }{m - 2 \gamma^2 + \gamma^4}.
\end{align*}
\end{theorem}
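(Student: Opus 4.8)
The plan is to bound the upper Dirichlet density of the complementary set $P := \{v \notin T : |a_v(\pi)| = \gamma\}$ and then conclude via $\underline{\delta}(S_\gamma) = 1 - \overline{\delta}(P)$; this identity holds because $\sum_{v \notin T} Nv^{-s} = \sum_{v \in S_\gamma} Nv^{-s} + \sum_{v \in P} Nv^{-s}$ while $\sum_{v \notin T} Nv^{-s} = \log\frac{1}{s-1} + O(1)$ as $s \to 1^+$. To control $\overline{\delta}(P)$ I will test against the nonnegative function $\phi_v := \big(|a_v(\pi)|^2 - t_0\big)^2$ for a suitable real constant $t_0$. Since $\phi_v \ge 0$ for all $v \notin T$ and $\phi_v = (\gamma^2 - t_0)^2$ for $v \in P$, one gets
\[
(\gamma^2 - t_0)^2 \sum_{v \in P} Nv^{-s} \ \le\ \sum_{v \notin T} \phi_v\, Nv^{-s} \ =\ \sum_v \frac{|a_v(\pi)|^4}{Nv^{s}} - 2 t_0 \sum_v \frac{|a_v(\pi)|^2}{Nv^{s}} + t_0^2 \sum_{v \notin T} \frac{1}{Nv^{s}},
\]
so everything reduces to estimating the three sums on the right as $s \to 1^+$.

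For the first sum the hypothesis enters. The unramified local factor of $L^T(s, \pi\times\pi\times\overline{\pi}\times\overline{\pi})$ at $v$ is $\prod_{i,j,k,l}\big(1 - \alpha_{i,v}\alpha_{j,v}\overline{\alpha_{k,v}}\,\overline{\alpha_{l,v}}\,Nv^{-s}\big)^{-1}$, whose logarithm is $\sum_{r\ge 1} \frac{1}{r\,Nv^{rs}}\big|\sum_i \alpha_{i,v}^{\,r}\big|^4$ — a Dirichlet series with nonnegative coefficients whose $r = 1$ term is $|a_v(\pi)|^4\, Nv^{-s}$. Since the Euler product converges for $\Re s > 1$, summing over $v$ gives $\sum_v |a_v(\pi)|^4 Nv^{-s} \le \log L^T(s, \pi\times\pi\times\overline{\pi}\times\overline{\pi})$ for real $s>1$; and since this Euler product is a convergent product of positive factors for real $s>1$ and $L^T$ has a pole of order $m$ at $s=1$, one has $\log L^T(s, \pi\times\pi\times\overline{\pi}\times\overline{\pi}) = m\log\frac{1}{s-1} + O(1)$ as $s\to 1^+$. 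Hence $\sum_v |a_v(\pi)|^4 Nv^{-s} \le m\log\frac{1}{s-1} + O(1)$. For the second sum I use the Rankin--Selberg $L$-function $L^T(s, \pi\times\widetilde{\pi})$ (note $\widetilde{\pi} \cong \overline{\pi}$ since $\pi$ is unitary): it has a simple pole at $s=1$, and $\log L^T(s, \pi\times\widetilde{\pi}) = \sum_v \sum_{r\ge 1}\frac{1}{r\,Nv^{rs}}\big|\sum_i\alpha_{i,v}^{\,r}\big|^2$, whose $r=1$ part is $\sum_v |a_v(\pi)|^2 Nv^{-s}$. The terms with $r \ge 2$ contribute, as $s \to 1^+$, at most a constant times $\sum_{v \notin T} Nv^{-2(s - 2\theta)}$, which converges at $s = 1$ \emph{precisely because} $\theta < 1/4$; therefore $\sum_v |a_v(\pi)|^2 Nv^{-s} = \log\frac{1}{s-1} + O(1)$. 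The third sum is $\log\frac{1}{s-1} + O(1)$ by the analytic properties of $\zeta_F$.

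Substituting the three estimates into the displayed inequality (the first enters as an upper bound, which is exactly the direction needed since its coefficient is $+1$; the other two as exact asymptotics) yields $(\gamma^2 - t_0)^2 \sum_{v \in P} Nv^{-s} \le \big(m - 2 t_0 + t_0^2\big)\log\frac{1}{s-1} + O(1)$, hence $\overline{\delta}(P) \le \frac{m - 2t_0 + t_0^2}{(\gamma^2 - t_0)^2}$ for every $t_0 \ne \gamma^2$ (the case $\gamma = 1$ is trivial, so assume $\gamma \ne 1$). It remains to minimize the right-hand side over $t_0 \in \R$: setting the derivative to zero gives $(t_0 - 1)(t_0 - \gamma^2) = t_0^2 - 2t_0 + m$, hence $t_0 = \frac{m - \gamma^2}{1 - \gamma^2}$, and at this value $\frac{m - 2t_0 + t_0^2}{(\gamma^2-t_0)^2} = \frac{t_0 - 1}{t_0 - \gamma^2} = \frac{m-1}{m - 2\gamma^2 + \gamma^4}$, using $t_0 - 1 = \frac{m-1}{1-\gamma^2}$ and $t_0 - \gamma^2 = \frac{m - 2\gamma^2+\gamma^4}{1-\gamma^2}$ (the denominator $m-2\gamma^2+\gamma^4 = m-1+(1-\gamma^2)^2$ is positive since $m\ge 1$ and $\gamma\ne 1$, and the value is $\le 1$). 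Therefore $\overline{\delta}(P) \le \frac{m-1}{m - 2\gamma^2 + \gamma^4}$, and $\underline{\delta}(S_\gamma) = 1 - \overline{\delta}(P) \ge 1 - \frac{m-1}{m - 2\gamma^2 + \gamma^4}$, as claimed.

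The substantive work is the analytic bookkeeping of the second paragraph rather than any single hard step: extracting $\log L^T(s,\pi\times\pi\times\overline{\pi}\times\overline{\pi}) = m\log\frac1{s-1} + O(1)$ as $s\to 1^+$ from just a pole of order $m$ together with a convergent Euler product (what is really used is positivity and non-vanishing of the partial products just to the right of $s=1$), and verifying that $\theta < 1/4$ is exactly the threshold that renders the prime-power terms of $\log L^T(s, \pi\times\widetilde{\pi})$ bounded near $s = 1$. Once these are in place, the remainder is the elementary one-parameter optimization above.
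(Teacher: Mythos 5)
Your argument is correct and yields exactly the bound in the statement, but it proceeds by a genuinely different mechanism than the paper's. The paper applies the Cauchy--Schwarz inequality in $\ell^2$ to the pair $\bigl((|a_v(\pi)|^2 - \gamma^2)Nv^{-s/2}\bigr)_v$ and $\bigl(1_{S_\gamma}(v)Nv^{-s/2}\bigr)_v$, noting that the first sequence vanishes off $S_\gamma$, so that after dividing by $\log\frac{1}{s-1}$ and taking $\liminf$ one reads off $\underline{\delta}(S_\gamma) \geq (1-\gamma^2)^2/(m - 2\gamma^2 + \gamma^4)$ directly. You instead bound the complementary density $\overline{\delta}(P)$ by testing against the nonnegative function $(|a_v(\pi)|^2 - t_0)^2$ with a free real parameter $t_0$ and then optimizing; the one-variable minimization reproduces the same constant. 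The two routes feed on identical analytic inputs --- the exact asymptotic $\sum_v |a_v(\pi)|^2 Nv^{-s} = \log\frac{1}{s-1} + O(1)$ (needing $\theta < 1/4$) from the simple pole of $L^T(s, \pi\times\overline\pi)$, and the one-sided bound $\sum_v |a_v(\pi)|^4 Nv^{-s} \le m\log\frac{1}{s-1} + O(1)$ from positivity and the order-$m$ pole --- and you are right to flag that the $|a_v|^4$ estimate only needs to be one-sided because it enters with coefficient $+1$, while the $|a_v|^2$ estimate must be two-sided. The Cauchy--Schwarz version is tighter to write down (the ``right'' shift $\gamma^2$ falls out of the structure with no optimization), whereas your version is more elementary and makes the source of the constant $\frac{m-1}{m-2\gamma^2+\gamma^4}$ transparent; they are in fact equivalent up to this cosmetic difference. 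The reduction $\underline{\delta}(S_\gamma) = 1 - \overline{\delta}(P)$, the verification that the critical $t_0 = \frac{m-\gamma^2}{1-\gamma^2}$ is a global minimum with value $\le 1$, the separate handling of $\gamma = 1$, and the explanation of why $\theta < 1/4$ controls the $r\ge 2$ prime-power terms are all correctly done.
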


\begin{remark}
For $n \geq 2$, we expect that $m \geq 2$. Given $\pi \in \mathcal{A}_0({\rm GL}_n(\A_F))$, one expects the existence of an automorphic tensor product $\pi \boxtimes \overline{\pi}$ with an isobaric decomposition of ${\rm Ad}\pi \boxplus 1$. If ${\rm Ad}\pi$ is cuspidal then $m = 2$, whereas if it decomposes further then $m$ may take larger values. 
\end{remark}

When $n = 2$ we will show that $m \leq 4$, which gives bounds of 1/4 for $\gamma = 0$ and 3/4 for $\gamma = 2$, both of which are sharp. This will be demonstrated in Section~\ref{sce} by constructing a dihedral automorphic representation that is associated to an Artin representation whose image is the quaternion group. If $\pi \in \mathcal{A}_0(GL_2(\A_F))$ is non-dihedral, then ${\rm Ad}\pi$ is cuspidal and so $m = 2$.

We also obtain bounds for essentially self-dual representations for GL(3). 
First, let us note that any $\pi \in \mathcal{A}_0(GL_3(\A_F))$ that is essentially self-dual is isomorphic to a twisted adjoint lift of some $\tau \in \mathcal{A}_0(GL_2(\A_F))$. In this case, we will say that $\pi$ is \textit{associated} to $\tau$. 
We also point out that a cuspidal automorphic representation for GL(2) is said to be \textit{solvable polyhedral} if its $L$-function is equal to that of an Artin representation that is either dihedral, tetrahedral, or octahedral (see~\cite{RW04}).

Now when $n = 3$ and $\pi$ is essentially self-dual and associated to a cuspidal representation for GL(2) that is not of solvable polyhedral type, we will show that $m = 3$ and so the bound will be 1/3 for $\gamma = 0$. This is stronger than the conjectured bound of 1/9 for GL(3), because we have excluded those representations which are solvable polyhedral or non-essentially self-dual.

For $n \geq 4$, we are not able to establish the meromorphic continuation and 
order of a pole at $s=1$ for  $L^T(s, \pi \times \pi \times \overline{\pi} \times \overline{\pi})$.
Crucially, one does not know (and we seem to be far from such a result) about the automorphy of the adjoint lift (of degree $n ^2 -1$) for general $n$. 

Theorem~\ref{thmb} will follow from Theorem~\ref{thmc},
and the fact that the Ramanujan conjecture implies a positive answer to Question~\ref{conja}. This is explained in Section~\ref{sthmb}.\\

The techniques in the proof of Theorem~\ref{thmc} can be extended to make use of higher degree product $L$-functions so as to obtain new bounds.

\begin{theorem}\label{thmd}
Given $\pi \in \mathcal{A}_0({\rm GL}_n(\A_F))$ such that $\theta (\pi) < 1/8$, assume that $L^T(s, \pi \times \pi \times \overline{\pi} \times \overline{\pi}), L^T(s, \pi ^{\times 3}\times \overline{\pi}^{\times 3})$ and $L^T(s, \pi ^{\times 4}\times \overline{\pi}^{\times 4})$ have absolutely convergent Euler products for $s > 1$ and poles of order $2, m'$ and $m$, respectively, at $s=1$. 
Then 
\begin{align*}
\underline{\delta}(S_\gamma) \geq 1  - \frac{m - m'^2 +4m' -8}{\gamma^8 + (4-2m')\gamma^6 + (2m' + m -12)\gamma^4 + (4m'-2m)\gamma^2 + (2m-m'^2)}.
\end{align*}
\end{theorem}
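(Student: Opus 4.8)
The plan is to generalize the argument underlying Theorem~\ref{thmc}: that bound is, in effect, the non-negativity of a $2\times 2$ Hankel determinant assembled from the Dirichlet series of $\zeta_F^T$, $L^T(s,\pi\times\overline\pi)$ and $L^T(s,\pi\times\pi\times\overline\pi\times\overline\pi)$, and with the two further product $L$-functions now available one can instead use a $3\times 3$ Hankel determinant.

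I would set $b_v := |a_v(\pi)|^2-\gamma^2$, so that $b_v=0$ exactly for $v\notin S_\gamma$ and $b_v\ge -\gamma^2$ for all $v\notin T$, and write $L=L(s):=\log\frac1{s-1}$. Since $a_v(\pi^{\times i}\times\overline\pi^{\times i})=|a_v(\pi)|^{2i}$ and $\theta(\pi)<1/8$, the prime-power terms of order $\ge 2$ in $\log L^T(s,\pi^{\times i}\times\overline\pi^{\times i})$ are holomorphic in a half-plane containing $s=1$; combined with the hypothesised absolute convergence for $s>1$, the prescribed pole orders, and holomorphy and non-vanishing on $\mathrm{Re}(s)=1$ away from $s=1$ (standard once these products are written as products of standard $L$-functions of unitary cuspidal representations), this gives $\sum_{v\notin T}|a_v(\pi)|^{2i}Nv^{-s}=r_iL+O(1)$ as $s\to 1^+$, with $(r_0,r_1,r_2,r_3,r_4)=(1,1,2,m',m)$. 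Expanding $b_v^k$ and using $b_v^k=0$ on $S_\gamma^c$ for $k\ge 1$ then yields
\[
\sum_{v\in S_\gamma}\frac{b_v^{\,k}}{Nv^s}=\nu_kL+O(1)\quad(1\le k\le 4),\qquad \nu_k:=\sum_{i=0}^k\binom ki(-\gamma^2)^{k-i}r_i ,
\]
so that $\nu_1=1-\gamma^2$, $\nu_2=\gamma^4-2\gamma^2+2$, $\nu_3=-\gamma^6+3\gamma^4-6\gamma^2+m'$ and $\nu_4=\gamma^8-4\gamma^6+12\gamma^4-4m'\gamma^2+m$, while $\sum_{v\in S_\gamma}Nv^{-s}=\nu_0(s)L$ with $\liminf_{s\to 1^+}\nu_0(s)=\underline\delta(S_\gamma)$ and $\nu_0(s)\le 1+o(1)$.

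The positivity input is that, for each $s>1$, the matrix $H(s)=\bigl(\sum_{v\in S_\gamma}b_v^{\,i+j}Nv^{-s}\bigr)_{0\le i,j\le 2}$ is the Gram matrix of $1,b_v,b_v^{\,2}$ for the positive-definite pairing $\langle f,g\rangle_s=\sum_{v\in S_\gamma}f(b_v)g(b_v)Nv^{-s}$, hence positive semidefinite, so $\det H(s)\ge 0$. Dividing by $L^3$ and letting $s\to 1^+$ turns this into $\nu_0(s)\bigl(\nu_2\nu_4-\nu_3^2\bigr)\ge \nu_1^2\nu_4-2\nu_1\nu_2\nu_3+\nu_2^3+o(1)$. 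Two elementary polynomial identities — which I would check directly from the formulas above — close the argument:
\[
\nu_2\nu_4-\nu_3^2=\gamma^8+(4-2m')\gamma^6+(2m'+m-12)\gamma^4+(4m'-2m)\gamma^2+(2m-m'^2)=:D ,
\]
\[
\bigl(\nu_2\nu_4-\nu_3^2\bigr)-\bigl(\nu_1^2\nu_4-2\nu_1\nu_2\nu_3+\nu_2^3\bigr)=m-m'^2+4m'-8=:N .
\]
Noting that $D=\nu_2\nu_4-\nu_3^2\ge 0$ is itself a $2\times 2$ Hankel minor, I would assume $D>0$ (the boundary case $D=0$ only needs minor separate attention), whereupon the inequality reads $\nu_0(s)\ge 1-N/D+o(1)$; taking $\liminf_{s\to 1^+}$ gives $\underline\delta(S_\gamma)\ge 1-N/D$, as asserted.

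The main obstacle is the analytic normalization rather than the algebra: one has to justify that $\sum_{v\notin T}|a_v(\pi)|^{2i}Nv^{-s}$ differs from $\log L^T(s,\pi^{\times i}\times\overline\pi^{\times i})$ by a function holomorphic at $s=1$ — this is where the hypothesis $\theta(\pi)<1/8$ is spent, the Satake parameters of the $i$-th product being small enough that the square-of-a-prime terms converge past $\mathrm{Re}(s)=1$ — and that each of the three hypothesised $L$-functions genuinely contributes $r_iL+O(1)$ near $s=1$, which relies on its holomorphy and non-vanishing on $\mathrm{Re}(s)=1$ apart from the pole. Granting this, the proof is the $3\times 3$ determinant identity together with the fact that a Gram matrix is positive semidefinite.
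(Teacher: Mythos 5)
Your proposal is correct and is, at bottom, the paper's argument differently packaged. The paper applies Cauchy--Schwarz to $\sum_v(|a_v(\pi)|^2-\gamma^2)(|a_v(\pi)|^2-c)1_S(v)Nv^{-s}$ and then optimizes over the free parameter $c$; the non-negativity of your $3\times 3$ Gram/Hankel determinant is precisely the optimized form of that one-parameter family of Cauchy--Schwarz inequalities, so both routes yield $\nu_0\ge(\nu_1^2\nu_4-2\nu_1\nu_2\nu_3+\nu_2^3)/(\nu_2\nu_4-\nu_3^2)$, and your two polynomial identities for $N$ and $D$ check out against the paper's numerator and denominator. The determinant packaging is arguably cleaner since it bypasses the explicit computation of the optimal $c$.

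One small inaccuracy that you should fix, and where the paper is more careful: you assert the two-sided asymptotic $\sum_v|a_v(\pi)|^{2i}Nv^{-s}=r_iL+O(1)$ for all $i\le 4$, attributing it to $\theta<1/8$. But $\theta<1/8$ only controls the $k\ge 2$ prime-power terms of $\log L^T$ for the degree-$n^4$ product; for the degree-$n^8$ product the paper deliberately uses only the one-sided inequality $\sum_v|a_v(\pi)|^8Nv^{-s}\le\log L^T(s,\pi^{\times 4}\times\overline\pi^{\times 4})=mL+O(1)$, which follows from positivity of the Dirichlet coefficients of $\log L^T$ and needs no Ramanujan input at all. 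Your Hankel argument survives this substitution: in $\det H\ge 0$ the coefficient of the true $\mu_4$ is the principal $2\times 2$ minor $\mu_0\mu_2-\mu_1^2\ge 0$, so replacing $\mu_4$ by the larger quantity $\nu_4L+O(1)$ only strengthens the inequality, and the remainder of your normalization then goes through unchanged. So: correct proof, same underlying mechanism, with the degree-8 step needing to be phrased one-sidedly as in the paper.
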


\begin{remark}
This version of Theorem~\ref{thmd} in the case of {\rm GL}(2) was indicated by a referee. Our assumption that $L^T(s, \pi \times \pi \times \overline{\pi} \times \overline{\pi})$ has a pole of order 2 at $s=1$ (which would be implied by the existence of a cuspidal adjoint lift) was made for ease of exposition; the general case would arise in a similar manner.
\end{remark}

The work of Kim--Shahidi~\cite{KS00,KS02} and Kim~\cite{Ki03} on the automorphy and cuspidality of the symmetric third and fourth power lifts will imply that for a unitary cuspidal automorphic representation $\pi$ for GL(2) that is not of solvable polyhedral type, we have $m' = 5$ and $m = 14$:

\begin{corollary}
Let $\pi$ be a unitary cuspidal automorphic representation for GL(2) that is not of solvable polyhedral type, then
\begin{align*}
\underline{\delta}(S_\gamma) \geq 1 - \frac{1}{3 + \gamma^2 (\gamma^2-2)^3}.
\end{align*}
\end{corollary}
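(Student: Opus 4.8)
The plan is to obtain the Corollary as an immediate specialization of Theorem~\ref{thmd}: the actual content is to verify that the hypotheses of that theorem hold for $\pi$ not of solvable polyhedral type, to compute the relevant pole orders $m'$ and $m$, and to substitute. The bound-towards-Ramanujan hypothesis is automatic here: for a unitary cuspidal $\pi$ on ${\rm GL}_2(\A_F)$ one has $\theta(\pi)\le 7/64 < 1/8$, so the restriction $\theta(\pi)<1/8$ is satisfied.

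Next I would assemble the automorphy input. By Gelbart--Jacquet, Kim--Shahidi~\cite{KS00,KS02}, and Kim~\cite{Ki03}, the lifts ${\rm Sym}^2\pi$, ${\rm Sym}^3\pi$, ${\rm Sym}^4\pi$ are automorphic, and they are all \emph{cuspidal} precisely because $\pi$ is not dihedral, tetrahedral, or octahedral, i.e.\ not of solvable polyhedral type (cuspidality of ${\rm Sym}^2\pi$ needs only that $\pi$ is non-dihedral). Writing $\omega=\omega_\pi$ for the central character, and combining $\pi\boxtimes\pi = {\rm Sym}^2\pi\boxplus\omega$ with the classical identities ${\rm Sym}^2\pi\boxtimes\pi = {\rm Sym}^3\pi\boxplus(\pi\otimes\omega)$ and ${\rm Sym}^3\pi\boxtimes\pi = {\rm Sym}^4\pi\boxplus({\rm Sym}^2\pi\otimes\omega)$, one obtains the isobaric decompositions
\begin{align*}
\pi^{\boxtimes 3} &= {\rm Sym}^3\pi \;\boxplus\; (\pi\otimes\omega)^{\boxplus 2}, \\
\pi^{\boxtimes 4} &= {\rm Sym}^4\pi \;\boxplus\; ({\rm Sym}^2\pi\otimes\omega)^{\boxplus 3} \;\boxplus\; (\omega^2)^{\boxplus 2},
\end{align*}
in which all constituents are cuspidal and pairwise non-isomorphic (they have distinct degrees within each decomposition, apart from the deliberately repeated summands). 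Since the $L^T$ of an isobaric Rankin--Selberg product factors as $\prod_{i,j}L^T(s,\sigma_i\times\tau_j)$, and each Jacquet--Shalika factor for cuspidal unitary $\sigma_i,\tau_j$ is absolutely convergent for ${\rm Re}(s)>1$ with a pole at $s=1$ of order $1$ if $\tau_j\simeq\widetilde{\sigma_i}$ and $0$ otherwise, the three $L$-functions in Theorem~\ref{thmd} converge in the asserted range, and their orders of pole at $s=1$ are the numbers of mutually dual constituent pairs. As $\pi$ is unitary, $\widetilde{\pi^{\boxtimes k}}=\overline{\pi}^{\boxtimes k}$, so this number is the sum of the squares of the multiplicities: from $\pi^{\boxtimes 2}$ one gets $1^2+1^2=2$ (this is the pole-order-$2$ hypothesis, equivalent to ${\rm Ad}\,\pi$ being cuspidal), from $\pi^{\boxtimes 3}$ one gets $m'=1^2+2^2=5$, and from $\pi^{\boxtimes 4}$ one gets $m=1^2+3^2+2^2=14$.

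Finally I would substitute $m'=5$, $m=14$ into the bound of Theorem~\ref{thmd}: the numerator becomes $m-m'^2+4m'-8 = 14-25+20-8 = 1$, and the denominator collapses to $\gamma^8-6\gamma^6+12\gamma^4-8\gamma^2+3 = 3+\gamma^2(\gamma^2-2)^3$, giving exactly $\underline{\delta}(S_\gamma)\ge 1-1/(3+\gamma^2(\gamma^2-2)^3)$. I expect the only genuinely delicate point to be the cuspidality (and hence the clean form of the pole counts) of the constituents in the isobaric decompositions of $\pi^{\boxtimes 3}$ and $\pi^{\boxtimes 4}$ — this is exactly where the hypothesis that $\pi$ is not of solvable polyhedral type enters — together with the bookkeeping for $\pi^{\boxtimes 4}$; everything else is formal, given Theorem~\ref{thmd} and the standard analytic properties of Rankin--Selberg $L$-functions.
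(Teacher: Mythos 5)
Your proposal is correct and takes essentially the same route as the paper: decompose $\pi^{\boxtimes k}$ into cuspidal isobaric constituents (possible by Gelbart--Jacquet and the cuspidality results of Kim and Kim--Shahidi for $\pi$ not of solvable polyhedral type), read off the pole orders $2,\,5,\,14$ of the Rankin--Selberg $L$-functions at $s=1$ as sums of squares of multiplicities, and substitute into Theorem~\ref{thmd}. The only cosmetic difference is that the paper phrases the decompositions via the self-dual twists $A^4\pi={\rm Sym}^4\pi\otimes\overline{\omega}^2$ and ${\rm Ad}\pi$ and writes out an explicit $L$-function factorization, whereas you work directly with ${\rm Sym}^j\pi\otimes\omega^\ell$ and appeal to the pairing count; the pole arithmetic and the final algebra ($m'=5$, $m=14$, numerator $=1$, denominator $=3+\gamma^2(\gamma^2-2)^3$) are identical.
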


\begin{remark}
For the family of unitary cuspidal automorphic representations for GL(2) that are not of solvable polyhedral type, Theorem~\ref{thmd} improves on the bounds from Theorem~\ref{thmc} for all but four values of $\gamma$ (those being $\pm (1/2)\sqrt{3 \pm \sqrt{5}}$). In particular, in the case of $\gamma = 0$ the bound improves from 1/2 to 2/3.
\end{remark}

It is also possible to obtain bounds that hold unconditionally for all unitary cuspidal automorphic representations for GL(n). If we focus on the occurrence of Hecke eigenvalues equal to a complex number $\alpha$, rather than the occurrence of Hecke eigenvalues with a given absolute value, then we obtain:

\begin{theorem}\label{propf}
Given $\pi \in \mathcal{A}_0({\rm GL}_n(\A_F))$, fix a complex number $\alpha$ and let $S=S(\pi, \alpha)=\{v \mid a_v(\pi) \neq \alpha \}$. Then  
 \begin{align*}
   \underline{\delta}(S) \geq \frac{|\alpha|^2}{|\alpha|^2+ 1}.
 \end{align*}
 \end{theorem}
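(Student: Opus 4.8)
The plan is to bound, in place of $\underline{\delta}(S)$, the upper density of the complementary set $P := \{v \mid a_v(\pi) = \alpha\}$; since $\underline{\delta}(S) = 1 - \overline{\delta}(P)$, it suffices to prove $\overline{\delta}(P) \le \tfrac{1}{|\alpha|^2+1}$. The whole argument rests on three asymptotics for Dirichlet series over the places $v \notin T$, as $s \to 1^+$ along the reals:
\[
\sum_{v} Nv^{-s} = \log\tfrac{1}{s-1} + O(1), \qquad \sum_{v} a_v(\pi)\,Nv^{-s} = O(1), \qquad \sum_{v} |a_v(\pi)|^2\,Nv^{-s} \le \log\tfrac{1}{s-1} + O(1).
\]
The first is the prime ideal theorem. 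For the second I would take $\log L^T(s,\pi)$, which is holomorphic and bounded near $s=1$ because $L^T(s,\pi)$ is entire and non-vanishing there (using $n\ge 2$), and subtract the higher prime-power terms $\sum_v\sum_{k\ge 2}\tfrac1k a_{v^k}(\pi)Nv^{-ks}$, which converge absolutely near $s=1$ since $\theta(\pi)<1/2$. For the third — the point requiring some care — I would use that $L^T(s,\pi\times\widetilde{\pi})$ has a simple pole at $s=1$, so $\log L^T(s,\pi\times\widetilde{\pi}) = \log\tfrac{1}{s-1}+O(1)$, together with the fact that, $\pi$ being unitary (hence $\widetilde{\pi}\simeq\overline{\pi}$), the Dirichlet coefficients of $\log L^T(s,\pi\times\widetilde{\pi})$ equal $\tfrac1k|a_{v^k}(\pi)|^2 \ge 0$; discarding the $k\ge 2$ terms yields the one-sided inequality with no hypothesis on $\theta$.

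The key step is then the choice of the non-negative weight
\[
f(v) := \bigl|\,1 + \overline{\alpha}\,a_v(\pi)\,\bigr|^2 = 1 + 2\,\mathrm{Re}\!\bigl(\overline{\alpha}\,a_v(\pi)\bigr) + |\alpha|^2\,|a_v(\pi)|^2 .
\]
The three asymptotics give $\sum_v f(v)\,Nv^{-s} \le (1+|\alpha|^2)\log\tfrac{1}{s-1} + O(1)$ (the middle, cross, term contributes only $O(1)$), while on $P$ one has $f(v) \equiv (1+|\alpha|^2)^2$. Since $f \ge 0$ everywhere,
\[
(1+|\alpha|^2)^2 \sum_{v\in P} Nv^{-s} \le \sum_{v} f(v)\,Nv^{-s} \le (1+|\alpha|^2)\log\tfrac{1}{s-1} + O(1),
\]
so dividing by $\log\tfrac{1}{s-1}$ and letting $s\to 1^+$ gives $\overline{\delta}(P) \le \tfrac{1}{|\alpha|^2+1}$, which is the claim.

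The weight $f$ is not ad hoc. Among weights of the form $A + 2\,\mathrm{Re}(\beta a_v) + B|a_v|^2$ that are everywhere non-negative — equivalently $A,B\ge 0$ with $AB \ge |\beta|^2$ — the ratio of the $\log\tfrac{1}{s-1}$-coefficient $A+B$ to the value $A + 2\,\mathrm{Re}(\beta\alpha) + B|\alpha|^2$ on $P$ is minimized, by Cauchy--Schwarz, exactly at $A=1$, $B=|\alpha|^2$, $\beta=\overline{\alpha}$, yielding the constant $1+|\alpha|^2$; the naive linear choice $f = A + B|a_v|^2$ (i.e.\ $\beta=0$) only gives $\overline{\delta}(P)\le 1/|\alpha|^2$. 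I expect the main obstacle to be not any hard estimate but rather (i) identifying this extremal quadratic form, and (ii) phrasing the $|a_v|^2$ bound as a one-sided inequality via positivity of the $\log$-coefficients, so that the conclusion holds for every $n$ regardless of the quality of the known bound $\theta(\pi)$; the vanishing of $\sum_v a_v(\pi)Nv^{-s}$ is then precisely what makes the cross term in $f$ cost nothing.
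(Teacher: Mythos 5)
Your proof is correct. It rests on the same three asymptotics that the paper uses, namely $\sum_v Nv^{-s} \sim \log\tfrac{1}{s-1}$, $\sum_v a_v(\pi)Nv^{-s} = O(1)$ from the holomorphy and non-vanishing of $L(s,\pi)$ at $s=1$, and $\sum_v |a_v(\pi)|^2 Nv^{-s} \le \log\tfrac{1}{s-1} + O(1)$ from the positivity of the Dirichlet coefficients of $\log L^T(s,\pi\times\overline\pi)$ and the simple pole. Where you diverge from the paper is in how those three facts are assembled. The paper applies Cauchy--Schwarz to $\sum_v (a_v(\pi)-\alpha)\,1_S(v)\,Nv^{-s}$, recognizes that the indicator is immaterial because $a_v - \alpha$ vanishes off $S$, and then divides and takes a liminf to get $|\alpha| \le (1+|\alpha|^2)^{1/2}\underline{\delta}(S)^{1/2}$. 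You instead work directly with the complementary set $P$, choose the non-negative quadratic weight $f(v) = |1+\overline{\alpha}\,a_v(\pi)|^2$ which is identically $(1+|\alpha|^2)^2$ on $P$, drop the contribution from $S$, and take a limsup. Both are quadratic-form arguments built on the same moment data, and they of course yield the same constant, but yours replaces the Cauchy--Schwarz step with a one-line positivity inequality and makes no use of the indicator function, which is marginally cleaner. Your closing optimality observation --- that within the two-parameter family $A + 2\,\mathrm{Re}(\beta a_v) + B|a_v|^2$ with $A,B\ge 0$, $AB\ge|\beta|^2$, the choice $A=1$, $B=|\alpha|^2$, $\beta=\overline{\alpha}$ minimizes the resulting bound on $\overline{\delta}(P)$ --- is accurate (the extremal ratio $\tfrac{A+B}{(\sqrt{A}+\sqrt{B}|\alpha|)^2}$ is minimized at $(\sqrt{A},\sqrt{B}) \propto (1,|\alpha|)$), and is in effect the dual reading of the paper's Cauchy--Schwarz step; it is a nice way to see that neither proof is leaving anything on the table with this method. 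One small caution: your parenthetical ``(using $n\ge 2$)'' for the boundedness of $\log L^T(s,\pi)$ is appropriate, since for $n=1$ one must also exclude the trivial character (for which the statement itself fails at $\alpha=1$); the paper glosses over this, but your care is warranted.
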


\begin{remark}
This inequality is sharp for $n = 1$ when $\alpha = + 1$ or $-1$. For example, let $\pi$ be any Hecke character of order two.
Since the inequality holds for all GL(n), one cannot expect a non-trivial bound for $\alpha = 0$ (see Remark~\ref{rka} of Section~\ref{pt6}).
\end{remark}

Returning to the set of examples that demonstrate that the bound in Question~\ref{conja} is optimal, we also use them to prove that (a suitable reinterpretation of) a theorem of Ramakrishnan~\cite{Ra97} is sharp. This theorem concerns the occurrence of primes at which the weak Ramanujan conjecture holds. Being sharp, any improvement in the bounds must then apply to a family of cuspidal automorphic representations that excludes the examples mentioned.

Fix $\pi \in \mathcal{A}_0({\rm GL}_n(\A_F))$. By~\cite{Ra97}, the set of places $v$ where $|a_v(\pi)|\leq k$ has a lower Dirichlet density of at least $(k^2 -1)/k^2$. In particular, setting $k = n$ gives strong lower bounds for the lower Dirichlet density of the set of places for which the Hecke eigenvalues satisfy the weak Ramanujan conjecture.
 However, the bounds weaken as $k$ tends to 1, and when $k \in [0,1]$, the inequality does not provide any information. 

The objective of Theorem~\ref{propf} is to shed light on the occurrence of Hecke eigenvalues which have absolute value less than or equal to 1. The trade-off is that it provides results about the occurrence of a fixed complex number, rather than elements within an interval.

Our proofs make use of key results in~\cite{KS02},~\cite{Ki03}, and~\cite{Ra00}, as well as techniques from~\cite{Ra94}, ~\cite{Ra97}, and~\cite{NW2}.\\

This paper is structured as follows.
In Section~\ref{s2}, we prove Theorems~\ref{thmc} and~\ref{thmd}. In Section~\ref{s3}, we show that certain families of cuspidal automorphic representations for GL(2) and GL(3) satisfy the conditions of either Theorem~\ref{thmc} or Theorem~\ref{thmd}. In Section~\ref{s4}, we prove Theorem~\ref{propf}. In Section~\ref{sce}, we address Question~\ref{conja}, explaining how it is implied by the Ramanujan conjecture and why it would be sharp. In Section~\ref{sthmb}, we complete the proof of Theorem~\ref{thmb}.\\

\section{Proof of Theorems~\ref{thmc} and~\ref{thmd}}\label{s2}

Given a number field $F$ and a set of places $S$, the \emph{lower Dirichlet density} of $S$ is 
\begin{align*}
\underline{\delta}(S) = \lim_{s \rightarrow 1^+} {\rm inf} \frac{\sum_{v \in S}{\rm N}v^{-s}}{\log\left( 1/(s-1) \right)}.
\end{align*}
Note that if $S$ has a Dirichlet density, then it coincides with its lower Dirichlet density.\\

Given an $L$-function with an Euler product over the set of places of $F$
\begin{align*}
  L(s) = \prod_{v}L_v(s),
\end{align*}
for a finite set of places $T$, we define the incomplete $L$-function 
\begin{align*}
  L^T(s) = \prod_{v \not \in T}L_v(s).
\end{align*}

Fix some $\pi \in \mathcal{A}_0({\rm GL}_n(\A_F))$ that satisfies the conditions of Theorem~\ref{thmc}, so that $L^T(s, \pi \times \pi \times \overline{\pi} \times \overline{\pi})$ has meromorphic continuation to $s = 1$ with a pole of order $m$ there and $\pi$ satisfies suitable bounds towards the Ramanujan conjecture.
Let $T$ be the set of infinite places as well as those finite places at which $\pi$ is ramified.\\

The condition that $\theta (\pi) < 1/4$ in Theorem~\ref{thmc} implies that
\begin{align}\label{expr1}
\log L^T(s, \pi \times \overline{\pi})
= \sum_{v \not \in T}\frac{|a_v(\pi)|^2}{Nv^s} + O\left(1\right) 
\end{align}
as $s \rightarrow 1^+$, and (using positivity)
\begin{align}\label{expr2a}
\log L^T(s, \pi \times \pi \times \overline{\pi} \times \overline{\pi})
\geq \sum_{v \not \in T}\frac{|a_v(\pi)|^4}{Nv^s}
\end{align}
for $s > 1$. 

Fix a non-negative number $\gamma$ and a set $S=S_\gamma = \{v \mid |a_v(\pi)|\neq \gamma\}$. Denote by $1_S(v)$ the indicator function on the set of finite places with respect to the set $S$.
For any fixed $s > 1$, the sequences $\left( \frac{ |a_v(\pi)|^{2} - \gamma^{2} }{ Nv^s}\right)$ and $\left(\frac{ 1_S (v) }{Nv^s}\right)$ are elements of the space $\ell^2$. We apply Cauchy--Schwarz for $\ell^2$ to obtain
\begin{align} \label{cs}
\left| \sum_{v \not \in T}\frac{(|a_v(\pi)|^{2} - \gamma^{2}) 1_S(v)}{Nv^s} \right| \leq \left(\sum_{v \not \in T}\frac{(|a_v(\pi)|^{2} - \gamma^{2})^2}{Nv^s}\right)^{1/2} \left(\sum_{v \in S}\frac{1}{Nv^s}\right)^{1/2}.
\end{align}
One knows that $L^T(s, \pi \times \overline{\pi})$ has a simple pole at $s=1$~\cite{JS81}. This, along with the conditions on the $L$-functions in Theorem~\ref{thmc},   
implies that
\begin{align*}
\log L^T(s, \pi \times \overline{\pi})
&= \log \left(\frac{1}{s-1}\right) + O\left(1\right),
\end{align*}
and
\begin{align*}
\log L^T(s, \pi \times \pi \times \overline{\pi} \times \overline{\pi})
&= m \log \left(\frac{1}{s-1}\right) + O\left(1\right).
\end{align*}

We put all this together to determine the limit inferior as $s \rightarrow 1^+$ of equation~(\ref{cs}) where both sides have been divided by $\log \left(1/ (s-1)\right)$.
The resulting inequality is
\begin{align*}
 \frac{(1 - \gamma^2)^2 }{m - 2 \gamma^2 + \gamma^4} \leq  \underline{\delta} (S)
\end{align*}
which completes the proof of Theorem~\ref{thmc}. \\

The condition in Theorem~\ref{thmd} that $\theta (\pi) < 1/8$ implies that 
\begin{align}\label{expr2b}
\log L^T(s, \pi \times \pi \times \overline{\pi} \times \overline{\pi})
&= \sum_{v \not \in T}\frac{|a_v(\pi)|^4}{Nv^s} + O\left(1\right),\\
\log L^T(s, \pi ^{\times 3} \times \overline{\pi} ^{\times 3})
&= \sum_{v \not \in T}\frac{|a_v(\pi)|^6}{Nv^s} + O\left(1\right),
\end{align}
as $s \rightarrow 1^+$, and
\begin{align}\label{expr3}
\log L^T(s, \pi ^{\times 4} \times \overline{\pi} ^{\times 4})
\geq \sum_{v \not \in T}\frac{|a_v(\pi)|^8}{Nv^s}.
\end{align}
for $s > 1$.

The proof of Theorem~\ref{thmd} proceeds in a similar manner to that of Theorem~\ref{thmc}, except now we use Cauchy-Schwarz to construct
\begin{align*}
\left| \sum_{v \not \in T}\frac{(|a_v(\pi)|^{2} - \gamma^{2}) (|a_v(\pi)|^2 -c) 1_S(v)}{Nv^s} \right| \leq \left(\sum_{v \not \in T}\frac{(|a_v(\pi)|^{2} - \gamma^{2})^2(|a_v(\pi)|^2 -c)^2}{Nv^s}\right)^{1/2} \\
\cdot \left(\sum_{v \in S}\frac{1}{Nv^s}\right)^{1/2}, 
\end{align*}
and we find (through elementary analysis) that the optimal choice for $c$ is 
\begin{align*}
c = \frac{\gamma^6 + (4-2m')\gamma^4 + (m+m' -8)\gamma^2 (-m + 2m')}{-\gamma^4 + (m'-2)\gamma^2 + (4-m')},
\end{align*}
leading to the bound 
\begin{align*}
\underline{\delta}(S_\gamma) \geq \frac{\gamma^8 + (4-2m')\gamma^6 + (2m' + m -12)\gamma^4 + (4m'-2m)\gamma^2 + (m-4m' + 8)}{\gamma^8 + (4-2m')\gamma^6 + (2m' + m -12)\gamma^4 + (4m'-2m)\gamma^2 + (2m-m'^2)}.\\
\end{align*}

\section{$L$-functions for GL(2) and GL(3)}\label{s3}

In this section we prove two things. First, we show that the $L$-function conditions in the statement of Theorem~\ref{thmc} are satisfied by specified families of unitary cuspidal automorphic representations for GL(2) and GL(3). Second, we show that the $L$-function conditions in Theorem~\ref{thmd} are satisfied by a certain family of unitary cuspidal automorphic representations for GL(2).

\subsection{Cuspidal representations for GL(2)}\ \\

We begin by remarking that the bounds of~\cite{KS02} or~\cite{BB11} towards the Ramanujan conjecture for GL(2) satisfy that condition for both Theorem~\ref{thmc} and Theorem~\ref{thmd}.\\

\begin{lemma} \label{lema}
For $\pi \in \mathcal{A}_0(GL_2(\A_F))$, let $T$ be the set that contains exactly all the infinite places as well as those finite places where $\pi$ is ramified. 
At $s=1$, $L^T(s, \pi \times \overline{\pi})$ has a simple pole and $L^T(s, \pi \times \pi \times \overline{\pi} \times \overline{\pi})$ has a pole of order at most four.
\end{lemma}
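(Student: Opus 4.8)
The plan is to decompose the relevant Rankin–Selberg $L$-functions into products of automorphic $L$-functions whose analytic behaviour at $s=1$ is known, and then count the contributions to the pole. For the first assertion, $L^T(s,\pi\times\overline\pi)$ having a simple pole at $s=1$ is exactly the statement of Jacquet–Shalika (cited as \cite{JS81}): since $\pi$ is cuspidal, $\overline\pi\simeq\widetilde\pi$, and $L^T(s,\pi\times\widetilde\pi)$ has a simple pole at $s=1$ and is otherwise holomorphic and non-vanishing on $\mathrm{Re}(s)=1$. So the work is entirely in the second assertion.

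For the fourth-power object, first I would rewrite $\pi\times\pi\times\overline\pi\times\overline\pi$ using the Gelbart–Jacquet symmetric square lift. One has an isobaric decomposition $\pi\boxtimes\pi\simeq \mathrm{Sym}^2\pi\boxplus(\pi\otimes\omega_\pi^{-1}\otimes\omega_\pi)$... more precisely $\pi\boxtimes\pi \simeq \mathrm{Sym}^2\pi \boxplus (\wedge^2\pi)$ where $\wedge^2\pi = \omega_\pi$ is the central character (a Hecke character), so $\pi\boxtimes\pi \simeq \mathrm{Sym}^2\pi \boxplus \omega_\pi$ as an isobaric automorphic representation of $\mathrm{GL}_4\times\mathrm{GL}_1$. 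Dually $\overline\pi\boxtimes\overline\pi \simeq \mathrm{Sym}^2\overline\pi \boxplus \overline{\omega_\pi}$. Therefore
\begin{align*}
L^T(s,\pi\times\pi\times\overline\pi\times\overline\pi)
&= L^T(s,\mathrm{Sym}^2\pi\times\mathrm{Sym}^2\overline\pi)\,
L^T(s,\mathrm{Sym}^2\pi\otimes\overline{\omega_\pi})\\
&\quad\times L^T(s,\mathrm{Sym}^2\overline\pi\otimes\omega_\pi)\,
L^T(s,\omega_\pi\overline{\omega_\pi}).
\end{align*}
The last factor is $\zeta_F^T(s)$ (since $\omega_\pi\overline{\omega_\pi}$ is trivial), contributing a simple pole. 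The middle two factors are principal $L$-functions of the cuspidal (or possibly isobaric, if $\mathrm{Sym}^2\pi$ is non-cuspidal) representation $\mathrm{Sym}^2\pi$ twisted by a Hecke character; by Jacquet–Shalika a principal $L$-function of a $\mathrm{GL}_m$ isobaric representation has a pole at $s=1$ only from trivial-character constituents, so each contributes at most a simple pole, and in fact a pole only in the dihedral case where $\mathrm{Sym}^2\pi$ contains a Hecke character. The first factor $L^T(s,\mathrm{Sym}^2\pi\times\mathrm{Sym}^2\overline\pi)$ is a Rankin–Selberg $L$-function on $\mathrm{GL}_3\times\mathrm{GL}_3$ (when $\mathrm{Sym}^2\pi$ is cuspidal) and has a simple pole at $s=1$ by Jacquet–Shalika; when $\mathrm{Sym}^2\pi$ decomposes (the dihedral case) one expands it into finitely many Rankin–Selberg products and counts poles, each summand contributing at most a simple pole. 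Adding up: in the generic (non-dihedral) case the order is at most $1+0+0+1 = 2$, and in the dihedral case the extra constituents push this up, but never beyond $4$ — this is where the bound "at most four" comes from, and it is attained, as the quaternionic example of Section~\ref{sce} shows.

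The main obstacle is bookkeeping in the degenerate cases: when $\mathrm{Sym}^2\pi$ fails to be cuspidal (equivalently $\pi$ is dihedral, i.e. $\pi\simeq\pi\otimes\eta$ for a quadratic Hecke character $\eta$), $\mathrm{Sym}^2\pi$ is an isobaric sum of (at most three) Hecke characters or of a Hecke character and a cuspidal $\mathrm{GL}_2$ representation, and one must carefully enumerate which Rankin–Selberg sub-products of $\mathrm{Sym}^2\pi\times\mathrm{Sym}^2\overline\pi$ acquire poles. Writing $\mathrm{Sym}^2\pi = \boxplus_i \sigma_i$ with $\sum \dim\sigma_i = 3$, the pole order of $L^T(s,\mathrm{Sym}^2\pi\times\mathrm{Sym}^2\overline\pi)$ equals the number of pairs $(i,j)$ with $\sigma_i\simeq\widetilde{\sigma_j}$, which is maximized when all $\sigma_i$ are (conjugate-dual) Hecke characters — giving the quaternionic case. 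The cleanest route is: reduce to the non-dihedral case via Gelbart–Jacquet (then $m=2$ as remarked in the text, using cuspidality of $\mathrm{Sym}^2\pi$ and of $\mathrm{Sym}^2\pi\times\mathrm{Sym}^2\overline\pi$), and handle the dihedral case separately, where $\pi$ is automorphically induced from a Hecke character of a quadratic extension and all the relevant $L$-functions factor through abelian $L$-functions of that extension, making the pole count explicit and bounded by $4$.
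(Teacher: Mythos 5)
Your decomposition $\pi\boxtimes\pi\simeq\mathrm{Sym}^2\pi\boxplus\omega_\pi$ is, after twisting by $\overline{\omega_\pi}$, identical to the paper's $\pi\boxtimes\overline{\pi}\simeq\mathrm{Ad}\,\pi\boxplus 1$; the overall structure (split into non-dihedral vs.\ dihedral, non-dihedral case gives pole order $2$ from cuspidality of the adjoint) matches the paper. However, your sketch of the dihedral case has two issues that prevent it from actually delivering the bound of $4$.

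First, the ``middle'' factors never contribute. You write that $L^T(s,\mathrm{Sym}^2\pi\otimes\overline{\omega_\pi})$ may acquire a simple pole ``in the dihedral case where $\mathrm{Sym}^2\pi$ contains a Hecke character.'' But $\mathrm{Sym}^2\pi\otimes\overline{\omega_\pi}\simeq\mathrm{Ad}\,\pi$, and a pole at $s=1$ of $L^T(s,\mathrm{Ad}\,\pi)$ would force $\pi\boxtimes\overline{\pi}\simeq\mathrm{Ad}\,\pi\boxplus 1$ to contain the trivial character with multiplicity $\geq 2$, contradicting the \emph{simple} pole of $L^T(s,\pi\times\overline{\pi})$. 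So for any cuspidal $\pi$, dihedral or not, these two factors are regular at $s=1$. If you instead allow them to each contribute a pole, your dihedral count could go up to $3+1+1+1=6$, which is too weak.

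Second, and more seriously, your claim that the pole order of the first factor $L^T(s,\mathrm{Ad}\,\pi\times\mathrm{Ad}\,\pi)$ ``is maximized when all $\sigma_i$ are conjugate-dual Hecke characters,'' with implicit value $3$, silently assumes the isobaric constituents of $\mathrm{Ad}\,\pi$ are pairwise distinct. If, say, $\mathrm{Ad}\,\pi\simeq\sigma\boxplus\sigma\boxplus\tau$ with a repeated character $\sigma$, the pair-count would be $5$, and the lemma would fail. You therefore need to rule out repetitions. The paper does this via Ramakrishnan's explicit cuspidality criterion: for dihedral $\pi=I_E^F(\nu)$ one has $\pi\boxtimes\overline{\pi}\simeq 1\boxplus\chi_{E/F}\boxplus I_E^F(\nu/\nu^\tau)$, and either $I_E^F(\nu/\nu^\tau)$ is cuspidal (giving three distinct self-dual constituents, hence pole order $3$), or $\nu/\nu^\tau$ is $\mathrm{Gal}(E/F)$-invariant and splits as $\lambda\boxplus\lambda\chi_{E/F}$ for a Hecke character $\lambda$; an elementary check (using that $\nu\neq\nu^\tau$, since $\pi$ is cuspidal, and $\chi_{E/F}\neq 1$) shows $1,\chi_{E/F},\lambda,\lambda\chi_{E/F}$ are pairwise distinct, yielding pole order exactly $4$. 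Your proposal anticipates this bookkeeping (``one must carefully enumerate\dots'') but does not carry it out, and the distinctness is precisely where the bound lives.
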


\begin{proof}
For any Rankin--Selberg $L$-function for GL(m) $\times$ GL(n), the Euler factor associated to a place $v$ is invertible in ${\rm Re}(s)\geq 1$.
Removing a finite number of factors from the Euler product does not affect the presence or order of a pole at $s=1$.
Therefore, the simple pole of $L^T(s, \pi \times \overline{\pi})$ follows from Rankin--Selberg theory~\cite{JS81}.\\

The analysis of $L^T(s, \pi \times \pi \times \overline{\pi} \times \overline{\pi})$ depends on whether $\pi$ is dihedral.
First, assume that $\pi$ is non-dihedral. Since $\pi \boxtimes \overline{\pi} \simeq {\rm Ad}\pi \boxplus 1$ (the automorphy on the left-hand side is due to~\cite{Ra00}),
\begin{align*}
 L^T(s, (\pi \boxtimes \overline{\pi}) \times (\pi \boxtimes \overline{\pi})) = L^T(s, {\rm Ad}\pi \times {\rm Ad}\pi)L^T(s, {\rm Ad}\pi)^2 \zeta^T_F(s). 
\end{align*}
 The adjoint representation is cuspidal if and only if $\pi$ is non-dihedral~\cite{GJ78}, and since it is self-dual, the $L$-function on the left-hand side has a pole of order two at $s=1$.\\

Let $\pi$ now be dihedral. We introduce some notation for this case.
A dihedral automorphic representation $\pi$ for $GL_2(\A_F)$ can be expressed as the automorphic induction from $E$ to $F$ of the Hecke character $\mu$, where $E$ is a quadratic extension of $F$. Denote this as $\pi = I^F_E(\mu)$. Note that in certain cases, the same dihedral representation may be induced from more than one quadratic extension.

We will make use of a theorem of Ramakrishnan (from \cite{Ra00}; see also \cite{PR12}) to analyse the incomplete $L$-function.

\begin{thm}
Given $\pi, \pi' \in \mathcal{A}_0(GL_2(\A_F))$ that are both dihedral, the automorphic tensor product $\pi \boxtimes \pi'$ is a cuspidal automorphic representation for $GL_4(\A_F)$ if and only if $\pi$ and $\pi'$ cannot be induced from the same quadratic extension.

On the other hand, if $\pi$ and $\pi'$ can be induced from the same quadratic extension $E$, express them as $\pi = I_E^F (\mu)$ and $\pi' = I_E^F (\nu)$, for suitable Hecke characters $\mu$ and $\nu$. Then,
\begin{align*}
  \pi \boxtimes \pi' \simeq I_E^F(\mu \nu) \boxplus I_E^F(\mu \nu^\tau),
\end{align*}
where $\tau$ denotes the non-trivial element of ${\rm Gal}(E/F)$.
\end{thm}

When $\pi' \simeq \overline{\pi}$, we have
\begin{align*}
  \pi \boxtimes \overline{\pi} \simeq 1 \boxplus \chi \boxplus  I_E^F(\nu / \nu^\tau),
\end{align*}
where $\chi$ is the Hecke character associated to the extension $E/F$.
If $\nu / \nu^\tau$ is not ${\rm Gal}(E/F)$-invariant, then $ I_E^F(\nu / \nu^\tau)$ is cuspidal and $L^T(s, (\pi \boxtimes \overline{\pi}) \times (\pi \boxtimes \overline{\pi}))$ has a pole of order at most three. On the other hand, if $\nu / \nu^\tau$ is ${\rm Gal}(E/F)$-invariant, then 
\begin{align*}
  \pi \boxtimes \overline{\pi} \simeq 1 \boxplus \chi \boxplus \nu / \nu^\tau \boxplus \left(\nu / \nu^\tau \cdot \chi \right).
\end{align*}
So $L^T(s, (\pi \boxtimes \overline{\pi}) \times (\pi \boxtimes \overline{\pi}))$ has a pole at $s=1$ of order at most four.
\end{proof}

Now let us turn to two of the $L$-functions mentioned in Theorem~\ref{thmd}.

\begin{lemma}
Given $\pi \in \mathcal{A}_0(GL_2(\A_F))$ that is not of solvable polyhedral type, the incomplete $L$-functions $L^T(s, \pi ^{\times 4} \times \overline{\pi} ^{\times 4})$ and  $L^T(s, \pi ^{\times 3} \times \overline{\pi} ^{\times 3})$ have poles at $s=1$ of orders 14 and 5, respectively.
\end{lemma}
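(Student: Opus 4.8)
The plan is to reduce these high-degree Rankin--Selberg $L$-functions to products of Rankin--Selberg $L$-functions of cuspidal representations, using the automorphy and cuspidality of the symmetric power lifts, and then to read off the order of the pole at $s=1$ via Jacquet--Shalika. Write $\omega$ for the central character of $\pi$. Since $\pi$ is not of solvable polyhedral type, the results of Gelbart--Jacquet~\cite{GJ78}, Kim--Shahidi~\cite{KS02}, and Kim~\cite{Ki03} show that $\mathrm{Sym}^2\pi$, $\mathrm{Sym}^3\pi$, and $\mathrm{Sym}^4\pi$ are cuspidal automorphic representations of $\mathrm{GL}_3$, $\mathrm{GL}_4$, and $\mathrm{GL}_5$ over $F$, respectively; together with $\pi$, the twist $\pi\otimes\omega$, and the Hecke character $\omega^2$, these supply all the cuspidal constituents we shall need.

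First I would establish, at the level of the Euler factors outside $T$ (equivalently, in terms of Satake parameters), the isobaric decompositions
\begin{align*}
\pi^{\boxtimes 3} &\;\sim\; \mathrm{Sym}^3\pi \,\boxplus\, (\pi\otimes\omega)^{\boxplus 2},\\
\pi^{\boxtimes 4} &\;\sim\; \mathrm{Sym}^4\pi \,\boxplus\, (\mathrm{Sym}^2\pi\otimes\omega)^{\boxplus 3} \,\boxplus\, (\omega^2)^{\boxplus 2},
\end{align*}
and the analogous ones for $\overline{\pi}$ (which has central character $\overline{\omega}$). These follow by iterating the Clebsch--Gordan identity $\mathrm{Sym}^a\pi \boxtimes \mathrm{Sym}^b\pi \cong \boxplus_{j=0}^{\min(a,b)} \mathrm{Sym}^{a+b-2j}\pi \otimes \omega^{j}$, starting from $\pi\boxtimes\pi \cong \mathrm{Sym}^2\pi \boxplus \omega$; the multiplicities can equally be verified directly by expanding the degree-$k$ monomials in the two Satake parameters of $\pi$.

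Next I would use the factorization of Rankin--Selberg $L$-functions of isobaric sums together with the Jacquet--Shalika theorem~\cite{JS81}: for unitary cuspidal $\sigma, \sigma'$, the incomplete $L$-function $L^T(s,\sigma\times\sigma')$ is holomorphic and non-vanishing at $s=1$ unless $\sigma'\cong\widetilde{\sigma}$, in which case it has a simple pole there. Since $\widetilde{\sigma}\cong\overline{\sigma}$ for unitary $\sigma$, writing $\pi^{\boxtimes k}\sim\boxplus_i m_i\sigma_i$ with the $\sigma_i$ pairwise non-isomorphic unitary cuspidal gives
\begin{align*}
L^T(s,\pi^{\times k}\times\overline{\pi}^{\times k}) = \prod_{i,j} L^T(s,\sigma_i\times\overline{\sigma_j})^{m_i m_j},
\end{align*}
in which the factor attached to a pair $(i,j)$ contributes a simple pole at $s=1$ exactly when $i=j$; hence the order of the pole there is $\sum_i m_i^2$. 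In each of the two decompositions above the constituents are pairwise non-isomorphic just by comparing their degrees ($4$ and $2$ in the first case; $5$, $3$, and $1$ in the second) and using cuspidality, so $L^T(s,\pi^{\times 3}\times\overline{\pi}^{\times 3})$ has a pole of order $1^2+2^2=5$ at $s=1$ and $L^T(s,\pi^{\times 4}\times\overline{\pi}^{\times 4})$ a pole of order $1^2+3^2+2^2=14$ there. (Using $L^T$ rather than the complete $L$-functions is harmless, since the finitely many omitted local factors are holomorphic and non-vanishing at $s=1$.)

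The main obstacle is essentially the bookkeeping in the second step, getting the multiplicities in the decomposition of $\pi^{\boxtimes 4}$ exactly right, and noting that the hypothesis that $\pi$ is not of solvable polyhedral type is precisely what forces $\mathrm{Sym}^2\pi$, $\mathrm{Sym}^3\pi$, and $\mathrm{Sym}^4\pi$ all to be cuspidal; once those ingredients are secured, the analytic part is entirely standard.
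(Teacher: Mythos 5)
Your argument is correct and is essentially the same as the paper's: both proofs invoke the Clebsch--Gordan decomposition of the tensor powers of $\pi$ together with the cuspidality of $\mathrm{Sym}^2\pi$, $\mathrm{Sym}^3\pi$, $\mathrm{Sym}^4\pi$ (Gelbart--Jacquet, Kim--Shahidi, Kim) and Jacquet--Shalika to count the pole order as $\sum_i m_i^2$, obtaining $1+3^2+2^2=14$ and $1+2^2=5$. The only cosmetic difference is that the paper phrases the degree-$256$ case by decomposing $\pi^{\boxtimes 2}\boxtimes\widetilde{\pi}^{\boxtimes 2}\cong A^4\pi\boxplus 3\,\mathrm{Ad}\pi\boxplus 2\cdot\mathbf{1}$ and writes only the diagonal Rankin--Selberg factors, whereas you decompose $\pi^{\boxtimes 4}$ directly and explicitly note that the off-diagonal cross-terms are holomorphic and non-vanishing at $s=1$; your version is slightly more carefully bookkept but the underlying computation is identical.
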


\begin{proof}
By the Clebsch-Gordon decomposition of tensor products, we know that 
\begin{align*}
L^T(s, \pi ^{\times 4} \times \overline{\pi}^{\times 4}) =
L^T(s, A ^4 \pi \times A ^4 \pi) L^T(s, {\rm Ad}\pi \times {\rm Ad}\pi)^9 \zeta_F^T(s)^4
\end{align*}
where $A^4 \pi = {\rm Sym}^4 \pi \otimes \overline{\omega}^2$, with $\omega$ being the central character of $\pi$.

From Kim and Kim--Shahidi~\cite{Ki03,KS02}, we know that $A ^4 \pi$ is automorphic and in fact it is cuspidal, since $\pi$ is not of solvable polyhedral type. 
By Gelbart--Jacquet~\cite{GJ78}, ${\rm Ad}\pi$ is automorphic, and because $\pi$ is not of dihedral type we have that ${\rm Ad}\pi$ is cuspidal.
Since they are both self-dual, the $L$-function $L^T(s, \pi ^{\times 4} \times \overline{\pi}^{\times 4})$ has a pole of order 14 at $s=1$.

The proof for $L^T(s, \pi ^{\times 3} \times \overline{\pi} ^{\times 3})$ follows similarly.
\end{proof}

\subsection{Essentially self-dual cuspidal representations for GL(3)}\label{gl3}
In this subsection, we show the existence of a family of cuspidal automorphic representations for GL(3) that satisfy the conditions of Theorem~\ref{thmc} (with $m \leq 3$).

We begin with the following theorem, known to the experts:

\begin{thm}
Given a self-dual cuspidal automorphic representation $\Pi$ for GL(3), there exists a cuspidal automorphic representation $\pi$ for GL(2) and a Hecke character $\nu$ of order (at most) two such that 
\begin{align*}
\Pi \simeq {\rm Ad}\pi \otimes \nu.
\end{align*}
Furthermore, $\pi$ is unique up to character twist.
\end{thm}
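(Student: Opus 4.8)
The plan is to reduce the statement about self-dual cuspidal representations of GL(3) to the Gelbart--Jacquet adjoint lift from GL(2) to GL(3), combined with the classification of self-dual representations via the symmetry type of their exterior-square $L$-functions. First I would observe that a self-dual cuspidal $\Pi$ on GL(3) is necessarily \emph{orthogonal}: since $\Pi \simeq \widetilde\Pi$, the Rankin--Selberg $L$-function $L^T(s,\Pi\times\Pi)$ has a simple pole at $s=1$, and it factors as $L^T(s,\Pi,\mathrm{Sym}^2)\,L^T(s,\Pi,\wedge^2)$. For GL(3) the exterior square is $3$-dimensional and in fact $\wedge^2\Pi \simeq \widetilde\Pi\otimes\omega_\Pi \simeq \Pi\otimes\omega_\Pi$, while $\mathrm{Sym}^2$ is $6$-dimensional. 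The pole must come from exactly one of the two factors; one checks (using that $\Pi$ is cuspidal, hence $L^T(s,\Pi\otimes\chi)$ is entire for any Hecke character $\chi$) that the pole sits in the symmetric-square factor, so $L^T(s,\Pi,\wedge^2)$ is holomorphic and nonzero at $s=1$ and $L^T(s,\Pi,\mathrm{Sym}^2)$ has the simple pole; equivalently, $\Pi$ is of orthogonal type. In particular the central character $\omega_\Pi$ satisfies $\omega_\Pi^2 = 1$ (comparing central characters in $\Pi\simeq\widetilde\Pi$ gives $\omega_\Pi = \omega_\Pi^{-1}$).

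Next I would invoke the descent/converse-theorem input: a cuspidal automorphic representation of GL(3) whose exterior-square (equivalently, by the above, whose partial $L$-function $L^T(s,\Pi,\mathrm{Sym}^2)$ has a pole) is of orthogonal type comes, via the Gelbart--Jacquet lift $\mathrm{Ad}\colon \mathcal A_0(\mathrm{GL}_2)\to\mathcal A(\mathrm{GL}_3)$, from GL(2). Concretely, one knows (this is the content that is "known to the experts"—it follows from the theory of the symmetric-square lift and its characterization of the image, together with the fact that an orthogonal self-dual $\Pi$ on GL(3) has trivial or quadratic central character) that there is $\pi\in\mathcal A_0(\mathrm{GL}_2(\A_F))$ with $\mathrm{Ad}\pi\simeq\Pi\otimes\nu^{-1}$ for a suitable Hecke character $\nu$. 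To pin down $\nu$: from $\mathrm{Ad}\pi\otimes\nu\simeq\Pi$ and the facts that $\mathrm{Ad}\pi$ is self-dual with trivial central character, comparing central characters forces $\nu^3=\omega_\Pi$, and combined with $\omega_\Pi^2=1$ and self-duality $\mathrm{Ad}\pi\otimes\nu\simeq\mathrm{Ad}\pi\otimes\nu^{-1}$ (so $\mathrm{Ad}\pi\simeq\mathrm{Ad}\pi\otimes\nu^2$) one deduces $\nu^2=1$, since if $\nu^2\neq 1$ then $\mathrm{Ad}\pi$ would be invariant under twisting by a nontrivial character, forcing $\pi$ to be dihedral induced from the associated quadratic field and a short case-check shows this still yields $\nu^2=1$. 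Hence $\nu$ has order at most two.

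For uniqueness up to character twist: suppose $\mathrm{Ad}\pi\otimes\nu \simeq \mathrm{Ad}\pi'\otimes\nu'$ for $\pi,\pi'\in\mathcal A_0(\mathrm{GL}_2)$ and quadratic $\nu,\nu'$. Then $\mathrm{Ad}\pi \simeq \mathrm{Ad}(\pi'\otimes\eta)$ for any Hecke character $\eta$ absorbing the twist appropriately (using $\mathrm{Ad}(\pi'\otimes\eta)\simeq\mathrm{Ad}\pi'$), so it suffices to show: if $\mathrm{Ad}\pi\simeq\mathrm{Ad}\pi'$ then $\pi'\simeq\pi\otimes\eta$ for some Hecke character $\eta$. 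This is a standard consequence of the injectivity (up to twist) of the Gelbart--Jacquet lift: $\mathrm{Ad}\pi\simeq\mathrm{Ad}\pi'$ implies $\pi\boxtimes\widetilde\pi$ and $\pi'\boxtimes\widetilde{\pi'}$ have the same isobaric constituents beyond the trivial one, and by the criterion of Ramakrishnan for GL(2)$\times$GL(2) this forces $\pi'\simeq\pi\otimes\eta$ with $\eta$ a Hecke character (quadratic unless $\pi$ is dihedral, in which case one still gets a character twist).

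The main obstacle is the middle step: correctly extracting, from the fact that $L^T(s,\Pi,\mathrm{Sym}^2)$ has a pole, the \emph{existence} of the GL(2) source $\pi$ with the sharp control that $\nu$ is at most quadratic. This is where one must either cite the precise descent theorem characterizing the image of the Gelbart--Jacquet lift (self-dual orthogonal $\Pi$ on GL(3) with the right central character are exactly the twisted adjoint lifts) or reconstruct it; the bookkeeping with central characters and the dihedral subcase is the part requiring genuine care, whereas orthogonality of $\Pi$ and the uniqueness statement are comparatively routine once the Rankin--Selberg and multiplicity-one inputs are in place.
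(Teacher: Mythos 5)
The paper itself offers no proof of this theorem: it records it as ``known to the experts,'' attributing it in a remark to a comparison of the stable trace formula for $SL(2)/F$ with the twisted trace formula for $PGL(3)/F$, or alternatively to Ramakrishnan's preprint \cite{Ra09} via the Ginzburg--Rallis--Soudry descent \cite{GRS99} and the Cogdell--Kim--Piatetski-Shapiro--Shahidi transfer \cite{CKPSS04}. Your sketch follows the same broad route as the latter (orthogonality of $\Pi$, then descent to a $\mathrm{GL}(2)$ source, then bookkeeping), so the framework is the right one, and your observation that the Rankin--Selberg pole must sit in the symmetric-square factor because $\wedge^2\Pi\simeq\Pi\otimes\omega_\Pi$ has entire $L$-function is correct.

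There is, however, a concrete error in the step that controls $\nu$. From $\mathrm{Ad}\pi\simeq\mathrm{Ad}\pi\otimes\nu^2$, comparing central characters (that of $\mathrm{Ad}\pi$ being trivial) gives $(\nu^2)^3=1$: the self-twisting character has order dividing $3$, not $2$. So ``forcing $\pi$ to be dihedral induced from the associated quadratic field'' is a misstep --- there is no associated quadratic field, and a cuspidal adjoint lift admitting a cubic self-twist does not force $\pi$ to be dihedral. The correct and cheaper fix is a re-choice of $\nu$: since $\omega_\Pi=\nu^3$ and $\omega_\Pi^2=1$, set $\nu'=\nu^3$; then $(\nu')^2=\nu^6=\omega_\Pi^2=1$ and $\mathrm{Ad}\pi\otimes\nu'=(\mathrm{Ad}\pi\otimes\nu^2)\otimes\nu\simeq\mathrm{Ad}\pi\otimes\nu\simeq\Pi$, so one may always normalize $\nu$ to be at most quadratic without any case analysis. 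A similar issue affects the uniqueness argument: writing $\mathrm{Ad}\pi\simeq\mathrm{Ad}(\pi'\otimes\eta)$ does not ``absorb'' the twist, since $\mathrm{Ad}(\pi'\otimes\eta)\simeq\mathrm{Ad}\pi'$ identically. The clean finish is again via central characters: if $\mathrm{Ad}\pi\otimes\nu\simeq\mathrm{Ad}\pi'\otimes\nu'$ with $\nu,\nu'$ at most quadratic, then the central characters $\nu^3=\nu$ and $\nu'^3=\nu'$ agree, so $\nu=\nu'$, hence $\mathrm{Ad}\pi\simeq\mathrm{Ad}\pi'$, and the multiplicity-one theorem for $SL(2)$ of \cite{Ra00} gives $\pi'\simeq\pi\otimes\eta$.
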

Note that $\nu$ is the central character of $\Pi$ and that $\pi$ must necessarily be non-dihedral (otherwise the adjoint lift would not be cuspidal).

\begin{remark}
This theorem was known in the folklore to arise from a comparison of the stable trace formula for $SL(2)/F$ with the twisted trace formula for $PGL(3)/F$. More recently, another proof was found by Ramakrishnan~\cite{Ra09}, which makes use of a descent of Ginzburg, Rallis, and Soudry~\cite{GRS99} and a forward transfer from odd orthogonal groups to GL(n) of Cogdell, Kim, Piatetski-Shapiro, and Shahidi~\cite{CKPSS04}.
\end{remark}

An immediate consequence of the theorem above is that any essentially self-dual cuspidal automorphic representation $\Pi$ for GL(3) can be expressed as ${\rm Ad}\pi \otimes \eta$, where $\pi$ is a non-dihedral automorphic representation for GL(2) and $\eta$ is a Hecke character. 
Given two such cuspidal automorphic representations $\Pi$ and $\pi$, we will say that $\Pi$ is \textit{associated} to $\pi$.

We now require the following lemma:

\begin{lemma}
Let $\Pi$ be an essentially self-dual unitary cuspidal automorphic representation for GL(3) associated to some $\pi$.
Denote the central character of $\pi$ by $\omega$, and let $T$ be the set containing exactly all the infinite places as well as the finite places where $\pi$ is ramified. Then
\begin{align*}
L^T(s, \Pi \times \Pi \times \overline{\Pi}\times \overline{\Pi}) &= L^T(s, (A^4 \pi \boxplus {\rm Ad}\pi \boxplus 1)\times (A^4 \pi \boxplus {\rm Ad}\pi \boxplus 1))
\end{align*}
\end{lemma}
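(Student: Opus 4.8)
The plan is to use the description $\Pi \simeq {\rm Ad}\pi \otimes \eta$ supplied by the theorem above — with $\pi$ non-dihedral, so that ${\rm Ad}\pi$ is cuspidal and unitary — in order to reduce the degree-$81$ Rankin--Selberg $L$-function to one attached to ${\rm Ad}\pi$ alone, and then to identify ${\rm Ad}\pi \boxtimes {\rm Ad}\pi$ with $A^4\pi \boxplus {\rm Ad}\pi \boxplus 1$. Since $\Pi$ and ${\rm Ad}\pi$ are both unitary, $\eta$ is a unitary Hecke character, so $\eta\,\overline{\eta}$ is the trivial character; and since ${\rm Ad}\pi$ is self-dual, $\overline{\Pi} \simeq {\rm Ad}\pi \otimes \overline{\eta}$. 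Hence in the fourfold product the character contributions $\eta\cdot\eta\cdot\overline{\eta}\cdot\overline{\eta}$ collapse, and at every $v \notin T$ — where $\pi_v$, and therefore also $({\rm Ad}\pi)_v$, $({\rm Sym}^4\pi)_v$ and $(A^4\pi)_v = ({\rm Sym}^4\pi)_v \otimes \overline{\omega}^2_v$, is unramified — one has $L_v(s, \Pi \times \Pi \times \overline{\Pi} \times \overline{\Pi}) = L_v(s, {\rm Ad}\pi \times {\rm Ad}\pi \times {\rm Ad}\pi \times {\rm Ad}\pi)$, and this holds regardless of whether $\eta$ itself is ramified at $v$. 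Regrouping the four factors into two pairs further gives $L_v(s, {\rm Ad}\pi \times {\rm Ad}\pi \times {\rm Ad}\pi \times {\rm Ad}\pi) = L_v(s, ({\rm Ad}\pi \boxtimes {\rm Ad}\pi) \times ({\rm Ad}\pi \boxtimes {\rm Ad}\pi))$, so, taking the Euler product over $v \notin T$, it remains to show
\[ L^T(s, ({\rm Ad}\pi \boxtimes {\rm Ad}\pi) \times ({\rm Ad}\pi \boxtimes {\rm Ad}\pi)) = L^T(s, (A^4\pi \boxplus {\rm Ad}\pi \boxplus 1) \times (A^4\pi \boxplus {\rm Ad}\pi \boxplus 1)). \]

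This remaining identity I would establish by comparing Satake parameters. Writing $t_v = \alpha_v/\beta_v$ for the ratio of the Satake parameters of $\pi_v$, the local representation $({\rm Ad}\pi)_v$ has parameters $\{t_v, 1, t_v^{-1}\}$, and forming the nine pairwise products shows that $({\rm Ad}\pi \boxtimes {\rm Ad}\pi)_v$ has parameters $t_v^{\pm 2}$ (each once), $t_v^{\pm 1}$ (each with multiplicity two) and $1$ (with multiplicity three). On the other hand $(A^4\pi)_v$, being $({\rm Sym}^4\pi)_v$ twisted by $\overline{\omega}^2_v$, has parameters $\{t_v^{\pm 2}, t_v^{\pm 1}, 1\}$, so $(A^4\pi)_v \oplus ({\rm Ad}\pi)_v \oplus 1$ carries exactly the same nine-element multiset. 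Now ${\rm Sym}^4\pi$ is automorphic for ${\rm GL}_5$ by Kim~\cite{Ki03}, hence $A^4\pi$ is an isobaric automorphic representation, and ${\rm Ad}\pi$ is automorphic for ${\rm GL}_3$ by Gelbart--Jacquet~\cite{GJ78}; therefore the right-hand side of the displayed equation is a genuine Rankin--Selberg $L$-function, and matching Satake parameters at each $v \notin T$ — taking all pairwise products of the nine parameters with themselves to obtain the $81$ parameters appearing on each side — yields the asserted equality of incomplete $L$-functions.

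The substance here is just the ${\rm SL}_2$ Clebsch--Gordan decomposition ${\rm Sym}^2 \otimes {\rm Sym}^2 = {\rm Sym}^4 \oplus {\rm Sym}^2 \oplus {\rm Sym}^0$ transported to the automorphic side, so no serious obstacle is expected; the two points that need attention are (i) carrying the twist $\eta$ through the fourfold product and verifying that it cancels — this is exactly where the unitarity of $\pi$, hence of $\eta$, enters, and it is also the reason one need not know $\eta$ to be unramified outside $T$ — and (ii) asserting the central identity only at the level of incomplete Euler products, since ${\rm Ad}\pi \boxtimes {\rm Ad}\pi$ (a functorial product on ${\rm GL}_3 \times {\rm GL}_3$) is not known to be automorphic and only its Satake parameters away from $T$ are being used.
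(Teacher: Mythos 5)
Your proposal is correct and takes essentially the same route as the paper: both arguments reduce to comparing Satake parameters at the places $v \notin T$ and invoking the $\mathrm{SL}_2$ Clebsch--Gordan decomposition $\mathrm{Sym}^2 \otimes \mathrm{Sym}^2 = \mathrm{Sym}^4 \oplus \mathrm{Sym}^2 \oplus 1$ to identify $(\mathrm{Ad}\,\pi)_v \otimes (\mathrm{Ad}\,\pi)_v$ with $(A^4\pi)_v \oplus (\mathrm{Ad}\,\pi)_v \oplus 1$. The one cosmetic difference is in bookkeeping: the paper computes $A_v(\Pi \times \overline{\Pi})$ directly (listing the nine parameters and splitting them as $5+3+1$) and then squares, while you first strip off the twist $\eta$ from the fourfold product and then regroup as $(\mathrm{Ad}\,\pi \boxtimes \mathrm{Ad}\,\pi) \times (\mathrm{Ad}\,\pi \boxtimes \mathrm{Ad}\,\pi)$. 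Your explicit tracking of the $\eta$ cancellation via $\eta\,\overline{\eta} = 1$ (using unitarity of $\eta$) is actually cleaner than the paper's display, which writes $\eta$ rather than $\eta^{-1}$ in the matrix for $A_v(\overline{\Pi})$ and so, as written, only balances when $\eta^2 = 1$; your version handles a general unitary $\eta$ correctly.
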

where $A^4 \pi \simeq {\rm Sym}^4 \pi \otimes \omega ^{-2}$. 
\begin{proof}
This follows by examining the decompositions of tensor powers of Langlands conjugacy classes.
Given $v \not \in T$, let us represent $A_v(\pi)$ by the matrix ${\rm diag}\{\alpha,\beta\}$. Writing $\Pi$ as ${\rm Ad}\pi \otimes \eta$ (as explained above), we can represent $A_v(\Pi)$ by ${\rm diag}\{\alpha \eta / \beta, \eta , \beta \eta / \alpha\}$. For $A_v(\Pi \times \overline{\Pi})$, note that 
\begin{align*}
&       \left( \begin{array}{ccc}
\alpha \eta /\beta          &  &  \\
         & \eta  &  \\
         &  & \beta \eta / \alpha
       \end{array} \right)
\otimes 
     \left( \begin{array}{ccc}
     \beta \eta / \alpha  &  &  \\
       & \eta &  \\
       &  & \alpha \eta / \beta
     \end{array} \right)
\end{align*}
 is equivalent to
\begin{align*}
&     \left( \begin{array}{ccccc}
     \alpha^2 / \beta^2  &  &  & & \\
       & \alpha / \beta &  & & \\
       &  & 1 &  &\\
       &  &  & \beta / \alpha &\\
       &  &  &  &  \beta^2 / \alpha^2
     \end{array} \right)
\oplus
     \left( \begin{array}{ccc}
     \alpha/ \beta  &  &  \\
       & 1 &  \\
       &  & \beta/\alpha
     \end{array} \right)
\oplus 
1.
\end{align*}
Therefore,
\begin{align*}
  L^T(s, \Pi \times \overline{\Pi}) = L^T(s, A^4 \pi \boxplus {\rm Ad}\pi \boxplus 1)
\end{align*}
and the lemma follows.
\end{proof}

Recall that $\pi$ is a cuspidal automorphic representation that is not of solvable polyhedral type. Thus, by Gelbart--Jacquet~\cite{GJ78}, the adjoint lift ${\rm Ad}\pi$ is cuspidal. By Kim--Shahidi~\cite{KS02}, the symmetric fourth power lift of $\pi$ is a cuspidal automorphic representation for GL(5), and so $A^4 \pi$ is cuspidal. Furthermore, $A^4 \pi$ is self-dual.
The identity in the lemma above therefore implies that $L^T(s, \Pi \times \Pi \times \overline{\Pi}\times \overline{\Pi})$ has a pole of order three.\\

As in the previous subsection, we note that the known bounds towards the Ramanujan conjecture for GL(2) (see~\cite{KS02},~\cite{BB11}) satisfy the conditions of Theorem~\ref{thmc} for essentially self-dual unitary cuspidal automorphic representations for GL(3).

\section{Proof of Theorem~\ref{propf}}\label{pt6}\label{s4}
We prove the final theorem from the introduction:
\begin{thm5}
Given $\pi \in \mathcal{A}_0({\rm GL}_n(\A_F))$, fix a complex number $\alpha$ and let $S=S(\pi, \alpha) = \{v \mid a_v(\pi) \neq \alpha \}$.
 Then 
 \begin{align*}
   \underline{\delta}(S) \geq \frac{|\alpha|^2}{|\alpha|^2 + 1}.
 \end{align*}
\end{thm5}

\begin{proof}
We begin by establishing the asymptotic properties of two particular Dirichlet series as $s \rightarrow 1^+$.
Let $T$ be the set containing the infinite places of $F$, as well as the finite places where $\pi$ is ramified. 
The bounds towards the Ramanujan conjecture established by Luo--Rudnick--Sarnak~\cite{LRS99}
imply 
\begin{align} \label{logeqn}
 \sum_{v \not \in T}\frac{a_v(\pi)}{Nv^s} =  \log L(s, \pi) + O\left( 1\right)
\end{align}
as $s \rightarrow 1^+$. The $L$-function $L(s, \pi)$ is invertible at $s=1$~\cite{JS76}, and so the right-hand side of~(\ref{logeqn}) is bounded as $s \rightarrow 1^+$. The same holds for the Dirichlet series on the left-hand side. 

To address the asymptotic behaviour of 
\begin{align*}
  \sum \frac{|a_v(\pi)|^2}{Nv^s},
\end{align*}
 the bound of Luo--Rudnick--Sarnak will not be sufficient (one would need a bound with exponent less than 1/4 rather than only less than 1/2). Instead, we can establish an upper bound on the rate of growth under this limit, as in Lemma 1.5 of~\cite{Ra97}, using the positivity of the series to show that  
 \begin{align*}
  \sum_{v \not \in T}\frac{|a_v(\pi)|^2}{Nv^s} \leq \log L^T(s, \pi \times \overline{\pi}) = \log \left(\frac{1}{s-1}\right) + O\left(1\right)
 \end{align*}
as $s \rightarrow 1^+$.

Let $1_{S} (v)$ be the indicator function for the set $S=S(\pi, \alpha)$. Then, applying Cauchy--Schwarz,
\begin{align*}
\left|  \sum_{v} \frac{ (a_v(\pi) - \alpha) 1_S (v)}{ Nv^s}\right| \leq \left( \sum_{v} \frac{|a_v(\pi) - \alpha|^2 }{ Nv^s}\right)^{1/2} \left(\sum_{v \in S}\frac{1}{Nv^s}\right)^{1/2}.
\end{align*}

We divide the inequality by $\log (1/ (s-1))$ and examine the limit infimum as $s \rightarrow 1^+$, applying our results above. We obtain
\begin{align*}
    \frac{|\alpha|^2}{|\alpha|^2+ 1} \leq \underline{\delta}(S).
\end{align*}
  
\end{proof}

\begin{remark}\label{rka}
As mentioned in the introduction, this inequality does not provide any information when $\alpha = 0$. The arguments in the proof, and thus the resulting inequality, apply for all GL(n). However, the examples in Section~\ref{sce} show that there exists a sequence $(\pi_n)_{n \in \N}$, where $\pi_n \in \mathcal{A}_0({\rm GL}_n(\A_F))$, such that $\underline{\delta}(S(\pi_n,0)) \rightarrow 0$ as $n \rightarrow \infty$. Therefore, a non-trivial bound for $\alpha = 0$ is not possible for an equation that holds in this much generality.
Obtaining a non-trivial bound would require finding a way to establish a bound that only holds for a finite number of integers $n$. 
\end{remark}

\section{Question~\ref{conja} and examples}\label{sce}\label{s5}
 
Let $E/K$ be a Galois extension of number fields where the Galois group is an $\ell$-adic Lie group. Fix a positive integer $r$ and a finite extension $F$ of $\Q_\ell$. Let $\rho : {\rm Gal}(\overline{K}/ K)\rightarrow {\rm GL}_r(F)$ be a homomorphism that factors through ${\rm Gal}(E/K)$, and denote by $\lambda$ the density of $\{ v \mid {\rm trace } (\rho ({\rm Frob}_v)) = 0\}$. Proposition 16 on page 371 of~\cite{Se81} states that 
\begin{align*}
 \lambda \leq 1- \frac{1}{r^2}.
\end{align*}

Serre explains that this bound is sharp for every value of $r$. The bound is attained by $\rho$ if and only if two particular conditions are satisfied: the representation must be absolutely irreducible, and the projective image in ${\rm PGL}(r)$ must be a group of order $r ^2$. For example, the product of an abelian group of order $r$ with itself will suffice. Furthermore, such examples exist for every $r$.

The section ends with the question of whether analogous results hold for cuspidal automorphic representations associated to reductive groups, even for those which do not correspond to $\ell$-adic representations. He states that it is likely and that such a question is related to the conjectures of Langlands.\\

We consider the case when the reductive group in question is GL(n). Fixing a positive integer $k$, we show how a positive answer to Question~\ref{conja} for GL(k) follows from the Ramanujan conjecture for GL(k).
For a unitary cuspidal automorphic representation $\pi$ for GL(n), the Rankin--Selberg $L$-function $L(s, \pi \times \widetilde{\pi})$ is meromorphic in $\C$, non-vanishing in ${\rm Re}(s) > 1$, and has a simple pole at $s=1$. 
Let $T$ be the set that contains exactly the finite places at which $\pi$ is ramified as well as all the infinite places. 
The Ramanujan conjecture for GL(n) implies that, as $s \rightarrow 1^+$,
\begin{align*}
 \sum_{v \not \in T}\frac{|a_v(\pi)|^2}{Nv^s} = \log L^T(s, \pi \times \overline{\pi}) + O\left(1\right)
\end{align*}
and thus 
\begin{align*}
 \sum_{v \not \in T}\frac{|a_v(\pi)|^2}{Nv^s} = \log \left(\frac{1}{s-1}\right) + O\left(1\right).
\end{align*}
Let $S$ be the set of places (outside $T$) where $a_v(\pi)$ is non-zero.
The (weak) Ramanujan conjecture implies 
\begin{align*}
\sum_{v \not \in T} \frac{|a_v(\pi)|^2}{Nv^s} \leq \sum_{v \in S} \frac{n ^2}{Nv^s}.
\end{align*}
Dividing both sides by $\log \left(1/(s-1)\right)$ and taking the limit infimum as $s \rightarrow 1^+$, we obtain 
\begin{align*}
\frac{1}{n ^2} \leq \underline{\delta}(S)
\end{align*}
which corresponds to the statement in Question~\ref{conja}.\\

We now comment on whether this conjectured bound would be sharp.
As mentioned above, there exists an absolutely irreducible representation $\tau_r : H \rightarrow {\rm GL}_r(\C)$ with the following two properties: first, the projective image is the product of an abelian group of order $r$ with itself, and second, the density of the set of places where the associated trace of Frobenius is zero is exactly $1- 1/r^2$. The group $H$ then factors through a nilpotent group, so we appeal to the following theorem of Arthur--Clozel~\cite{AC89}. 
\begin{thm}
 Given an irreducible Artin representation $\rho$ of degree $n$ that factors through a nilpotent group, there exists a cuspidal automorphic representation $\pi$ for GL(n) such that 
 \begin{align*}
   L(s, \rho) = L(s, \pi).
 \end{align*}
\end{thm}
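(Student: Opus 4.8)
The plan is to reduce the statement to the Arthur--Clozel theory of cyclic base change and automorphic induction, exploiting the fact that every finite nilpotent group is monomial. The base case is classical: a one-dimensional representation of a Galois group is, via class field theory, a Hecke character, which is automorphic for $\mathrm{GL}_1$ with matching (complete) $L$-function. So the entire content is to climb from $\mathrm{GL}_1$ up to $\mathrm{GL}_n$ while preserving the $L$-function.

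First I would use the monomial structure. Since $\rho$ factors through a finite nilpotent group $G = \mathrm{Gal}(E/K)$, and nilpotent groups are M-groups, there is a subgroup $H \leq G$ and a one-dimensional character $\chi$ of $H$ with $\rho \simeq \mathrm{Ind}_H^G \chi$. Let $L = E^H$, so that $K \subseteq L \subseteq E$ and, via class field theory, $\chi$ corresponds to a Hecke character of $L$; then $L(s,\rho) = L(s, \mathrm{Ind}_{L/K}\chi)$. Because $G$ is nilpotent, $H$ is subnormal in $G$; refining a subnormal chain from $H$ to $G$ (using that nilpotent groups, hence all their subquotients, are solvable) gives $H = H_0 \lhd H_1 \lhd \cdots \lhd H_r = G$ with each $H_i/H_{i-1}$ cyclic of prime order. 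On fixed fields this is a tower $L = L_0 \supseteq L_1 \supseteq \cdots \supseteq L_r = K$ in which each $L_{i-1}/L_i$ is cyclic of prime degree, and $\mathrm{Ind}_{L/K}\chi$ is the iterated induction of $\chi$ along this tower.

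Next I would apply the Arthur--Clozel automorphic induction for cyclic extensions of prime degree, one step at a time. Starting from the automorphic representation $\chi$ of $\mathrm{GL}_1(\mathbb{A}_L)$, the $i$-th step produces an isobaric automorphic representation of $\mathrm{GL}_{d_i}(\mathbb{A}_{L_i})$ whose complete $L$-function equals that of the previous step's representation regarded over $L_{i-1}$; iterating, one obtains an isobaric automorphic representation $\pi$ of $\mathrm{GL}_n(\mathbb{A}_K)$ with $L(s,\pi) = L(s, \mathrm{Ind}_{L/K}\chi) = L(s,\rho)$, including the factors at ramified and archimedean places.

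It remains to see that $\pi$ is cuspidal, and here I would use irreducibility of $\rho$. Write $\pi = \pi_1 \boxplus \cdots \boxplus \pi_t$ with each $\pi_j$ cuspidal. The order of the pole of $L(s, \pi \times \widetilde{\pi})$ at $s = 1$ is $\#\{(j,k) : \pi_j \simeq \pi_k\} \geq t$, by Rankin--Selberg theory. On the other hand, automorphic induction is compatible with Rankin--Selberg $L$-functions, so $L(s, \pi \times \widetilde{\pi}) = L(s, \rho \otimes \overline{\rho})$, whose pole order at $s = 1$ equals the multiplicity of the trivial representation in $\rho \otimes \overline{\rho}$, namely $\dim \mathrm{Hom}_G(\rho, \rho) = 1$ since $\rho$ is irreducible. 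Hence $t = 1$ and $\pi$ is cuspidal, completing the proof. The genuinely hard input --- the construction of automorphic induction for cyclic prime-degree extensions, with its $L$-function identities, cuspidality criterion, and Rankin--Selberg compatibility --- is the deep trace-formula comparison of Arthur--Clozel, invoked here as a black box; the remaining work is the bookkeeping of the monomial structure and the careful iteration, and the one place to be watchful is that the local $L$-factors at bad and archimedean places genuinely propagate through the iterated induction.
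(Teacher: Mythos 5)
The paper offers no proof of this statement: it is quoted as a theorem of Arthur--Clozel with a citation to \cite{AC89}, so there is no argument in the paper against which to compare your proof. That said, your sketch is a correct and essentially complete reconstruction of the derivation from the core results of that reference. The group theory is right: nilpotent groups are monomial (being supersolvable), every subgroup of a nilpotent group is subnormal, and a subnormal chain from $H$ to $G$ can be refined to one with cyclic prime-order quotients since all sections of a nilpotent group are solvable; transitivity of induction together with class field theory for the $\mathrm{GL}_1$ base case then lets you climb the resulting tower of cyclic prime-degree extensions, applying Arthur--Clozel automorphic induction (extended additively to isobaric sums) at each step. The cuspidality argument via the order of the pole of $L(s,\pi \times \widetilde{\pi})$ at $s=1$ and Schur's lemma is the standard Jacquet--Shalika criterion; note that it only requires matching Satake parameters with Frobenius eigenvalues at unramified places, since the finitely many local factors at bad and archimedean places are non-vanishing at $s=1$ and hence do not affect the pole order --- so you need rather less than the full ``compatibility of automorphic induction with Rankin--Selberg $L$-functions'' you invoke. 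Your closing caution about local factors at bad and archimedean places is well-placed if one wants the literal equality $L(s,\rho)=L(s,\pi)$ of complete $L$-functions as stated, but that local compatibility is indeed part of what Arthur--Clozel establish.
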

Applying this to the Artin representation above proves the existence of a cuspidal automorphic representation that demonstrates that the conjectured bound is sharp.\\

As a side note, we explain how the Ramanujan conjecture also implies a bound for the occurrence of another set of Hecke eigenvalues. Such a bound is implicit in~\cite{Ra97}. Our observation is that it is sharp, due to the same examples as above.

Given $\pi \in \mathcal{A}_0({\rm GL}_n(\A_F))$, define the set $S' = \{v \mid |a_v(\pi)| \neq n\}$. The Ramanujan conjecture for GL(n), implies the following inequality
\begin{align*}
\sum_{v} \frac{n ^2 - |a_v(\pi)|^2}{Nv^s} \leq \sum_{v \in S'}\frac{n ^2}{Nv^s}.
\end{align*}
Dividing by $\log \left(1/(s-1)\right)$ and taking the limit infimum as $s \rightarrow 1^+$, we obtain 
\begin{align} \label{eq1}
\frac{n ^2 -1}{n ^2} \leq \underline{\delta}(S').
\end{align}

This bound being implied by~\cite{Ra97}, let us recall the relevant theorem:
\begin{thm}[Ramakrishnan]
  Given a unitary cuspidal automorphic representation $\pi$ for GL(n),
then
\begin{align*}
  \underline{\delta}(\{v \mid |a_v(\pi)|\leq n\})\geq \frac{n^2 -1}{n^2}.
\end{align*}
\end{thm}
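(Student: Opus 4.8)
The plan is to bypass Cauchy--Schwarz entirely and argue directly from the second-moment bound that already underlies Sections~\ref{s4} and~\ref{s5}. Write $N := \{v \mid |a_v(\pi)| > n\}$, so that the set $\{v \mid |a_v(\pi)| \le n\}$ is the complement of $N$ (the finite set $T$ being irrelevant for densities). Since the set of all places has Dirichlet density $1$, dividing the identity $\sum_{v \in S} Nv^{-s} = \sum_v Nv^{-s} - \sum_{v \in N} Nv^{-s}$ by $\log(1/(s-1))$ and passing to limits shows $\underline{\delta}(S) = 1 - \overline{\delta}(N)$, where $\overline{\delta}$ denotes the upper Dirichlet density. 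So it suffices to prove $\overline{\delta}(N) \le 1/n^2$.

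The two analytic inputs are exactly those used already. First, $L^T(s, \pi \times \overline{\pi})$ is holomorphic and non-vanishing for $\mathrm{Re}(s) > 1$ with a simple pole at $s = 1$ (Rankin--Selberg theory, \cite{JS81}), whence
\begin{align*}
\log L^T(s, \pi \times \overline{\pi}) = \log\!\left(\frac{1}{s-1}\right) + O(1) \qquad (s \to 1^+).
\end{align*}
Second, using the positivity of the Dirichlet coefficients of $\log$ of a Rankin--Selberg $L$-function together with a bound towards Ramanujan (the Luo--Rudnick--Sarnak estimate~\cite{LRS99} suffices to ensure convergence in $\mathrm{Re}(s)>1$), one has, for $s > 1$, the termwise domination
\begin{align*}
\sum_{v \not\in T}\frac{|a_v(\pi)|^2}{Nv^s} \le \log L^T(s, \pi \times \overline{\pi}),
\end{align*}
since the coefficient of $Nv^{-s}$ in the logarithm is precisely $|a_v(\pi)|^2$ for $v \not\in T$, while the remaining (prime-power) coefficients are non-negative, being of the form $k^{-1}\,|\sum_i \alpha_{i,v}^{\,k}|^2$. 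This is the content of Lemma~1.5 of~\cite{Ra97}.

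Now restrict the left-hand sum to $v \in N$, where $|a_v(\pi)|^2 > n^2$:
\begin{align*}
n^2 \sum_{\substack{v \not\in T \\ v \in N}}\frac{1}{Nv^s} \;<\; \sum_{v \not\in T}\frac{|a_v(\pi)|^2}{Nv^s} \;\le\; \log\!\left(\frac{1}{s-1}\right) + O(1).
\end{align*}
Dividing by $\log(1/(s-1))$ and taking the limit superior as $s \to 1^+$ gives $n^2\,\overline{\delta}(N) \le 1$, hence $\overline{\delta}(N) \le 1/n^2$, and therefore $\underline{\delta}(\{v \mid |a_v(\pi)| \le n\}) = 1 - \overline{\delta}(N) \ge (n^2-1)/n^2$. (The same computation with $n$ replaced by any real $k \ge 1$ yields the threshold version recalled in the introduction.)

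The one point requiring care, and the only place where a bound towards Ramanujan actually enters, is the termwise domination: one must know that $\log L^T(s, \pi \times \overline{\pi})$ is genuinely represented by a convergent Dirichlet series with non-negative coefficients in the relevant range, so that discarding the prime-power terms only decreases it. Granting that (standard, via the analytic properties of the Rankin--Selberg $L$-function and the Luo--Rudnick--Sarnak bound), the remainder is the elementary density manipulation above, with no appeal to Cauchy--Schwarz.
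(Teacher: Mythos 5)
The paper does not actually prove this theorem; it is quoted verbatim as a result of Ramakrishnan and attributed to~\cite{Ra97}. So there is no ``paper's own proof'' for comparison. What the paper does give, immediately before this citation, is a short parallel argument \emph{conditional on the Ramanujan conjecture}, bounding $\underline{\delta}(\{v : |a_v(\pi)| \neq n\})$ via the simple pole of $L^T(s,\pi\times\overline\pi)$; the theorem as stated is unconditional and is due to Ramakrishnan.

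Your argument is a correct reconstruction of Ramakrishnan's original proof (cf.~Lemma 1.5 and the surrounding material in~\cite{Ra97}), and it is well organised: pass to the complement $N=\{v:|a_v(\pi)|>n\}$, use $\underline{\delta}(S)=1-\overline{\delta}(N)$, bound $\sum_{v\notin T}|a_v(\pi)|^2 Nv^{-s}$ above by $\log L^T(s,\pi\times\overline\pi)$ via the nonnegativity of the coefficients $k^{-1}\left|\sum_i\alpha_{i,v}^k\right|^2$, invoke the simple pole of the Rankin--Selberg $L$-function at $s=1$, and restrict the sum to $N$ where each term exceeds $n^2 Nv^{-s}$. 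The density manipulation is sound because $\sum_v Nv^{-s}/\log(1/(s-1))\to 1$, so $\liminf$ of the difference is $1-\limsup$. One small calibration: the appeal to Luo--Rudnick--Sarnak is not actually needed. The absolute convergence of the Euler product of $L(s,\pi\times\overline\pi)$ in $\mathrm{Re}(s)>1$, and hence the validity and nonnegativity of the Dirichlet series for its logarithm there, is a consequence of Jacquet--Shalika's Rankin--Selberg theory (which already gives $|\alpha_{i,v}|<Nv^{1/2}$), not of any sub-$1/2$ bound towards Ramanujan. Attributing the convergence to~\cite{LRS99} is a harmless misattribution but worth correcting, since part of the point of Ramakrishnan's theorem is that it requires no input beyond standard Rankin--Selberg analytics. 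Your closing remark that replacing $n$ by any $k\geq 1$ gives the threshold version (and that the strict-vs-nonstrict inequality in the set makes no difference to the argument) is also accurate and matches the paper's own comment.
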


Ramakrishnan explained that the proof leads to a more general statement. Given a unitary cuspidal automorphic representation $\pi$ for GL(n) and a positive number $k$, we have
\begin{align*}
  \underline{\delta}(\{v \mid |a_v(\pi)|< k\})\geq \frac{k^2 -1}{k^2}.
\end{align*}
The inequality in the description of the set on the left-hand side was not originally strict, but the proof stands under this slight alteration. We point out that the bound here implies equation~(\ref{eq1}) above.\\

We show that this bound is sharp when $k=n$.
The projective image of the representation $\tau_n$ is an abelian group of order $n^2$, and thus the number of elements of $\tau_n (H)$ with trace zero is 
\begin{align*}
 \left(\frac{n^2-1}{n^2}\right) |\tau_n (H)|.
\end{align*}
The set of the remaining elements in $\tau_n(H)$ is the fibre (with respect to the projection ${\rm GL}_n(\C) \rightarrow {\rm PGL}_n(\C)$) over the identity element in the abelian group. Their images in ${\rm GL}_n(\C)$ are all of the form $e^{i \theta }I_n$, so a proportion of exactly $1/ n^2$ of the elements of $\tau_n (H)$ have a trace that has absolute value equal to $n$. The corresponding cuspidal automorphic representation $\pi=\pi(\tau_n)$ (via the theorem of Arthur--Clozel) then proves that equation~(\ref{eq1}) is sharp.

Because the bound is sharp, increasing it would only be possible for a proper subset of cuspidal automorphic representations. This is illustrated in~\cite{Ra97} for $n = 2$. There, the bound of 1/4 is sharp due to certain dihedral automorphic representations. By assuming the cuspidality of the symmetric square, one excludes the dihedral case and obtains a bound of 9/10 that holds for all non-dihedral unitary cuspidal automorphic representations for GL(2). Similarly when $n > 2$, increasing the bound will only be possible for a subset of unitary cuspidal automorphic representations for GL(n).

\section{Proof of Theorem~\ref{thmb}}\label{sthmb}\label{s6}

We consolidate the results of the previous sections to prove Theorem~\ref{thmb}.

\begin{thm2}
Let $\pi \in \mathcal{A}_0({\rm GL}_n(\A_F))$, where $n = 2$, or $\pi$ is essentially self-dual and $n = 3$.
Then the upper density of places $v$ at which the Hecke eigenvalue is zero is bounded above by $1-1/n^2$, where $n = 2$ or $3$, respectively.
\end{thm2}

\begin{proof}
As shown in Section~\ref{s3}, Theorem~\ref{thmc} implies Theorem~\ref{thmb} in all cases except when $\pi$ is an essentially self-dual cuspidal automorphic representation for GL(3) that is associated to a tetrahedral or octahedral automorphic representation (recall that `associated' was defined in Subsection~\ref{gl3}).

Since the Ramanujan conjecture holds for tetrahedral and octahedral automorphic representations, it therefore also holds for the essentially self-dual cuspidal automorphic representations for GL(3) associated to them. The proof in Section~\ref{sce}, which shows that the Ramanujan conjecture for $\pi$ implies a positive answer to Question~\ref{conja} for $\pi$, applies here. This concludes the proof of Theorem~\ref{thmb}.\\
\end{proof}

\subsection*{Acknowledgements}
The author would like to thank Dinakar Ramakrishnan for some productive discussions, Paul Nelson for his useful comments on an earlier draft of this paper, and the referees for their helpful comments on the strengthening of Theorem~\ref{thmd} as well as the exposition of the paper.

%\bibliography{mybib}{}

\begin{thebibliography}{00}

\bibitem{AC89}
Arthur, J. and L. Clozel.
\textit{Simple algebras, base change, and the advanced theory of the trace formula}, vol.~120 of {Annals of Mathematics Studies}.
Princeton University Press, Princeton, NJ, 1989.

\bibitem{BB11}
Blomer, V., and F. Brumley.
``On the {R}amanujan conjecture over number fields.''
\textit{Annals of Mathematics} 174, no. 1 (2011), 581--605.

\bibitem{CKPSS04}
Cogdell, J.~W., H.~H. Kim, I.~I. Piatetski-Shapiro, and F. Shahidi.
``Functoriality for the classical groups.''
\textit{Publications Math\'ematiques. Institut de Hautes \'Etudes
              Scientifiques}, 99 (2004), 163--233.

\bibitem{GJ78}
Gelbart, S., and H. Jacquet.
``A relation between automorphic representations of {${\rm GL}(2)$} and
  {${\rm GL}(3)$}.''
\textit{Annales Scientifiques de l'\'Ecole Normale Sup\'erieure} 11, no. 4 (1978), 471--542.

\bibitem{GRS99}
Ginzburg, D., S. Rallis and D. Soudry.
``On explicit lifts of cusp forms from {${\rm GL}_m$} to classical
  groups.''
\textit{Annals of Mathematics} 150, no. 3 (1999), 807--866.

\bibitem{JS76}
Jacquet, H., and J.~A. Shalika.
``A non-vanishing theorem for zeta functions of {${\rm GL}_{n}$}.''
\textit{Inventiones Mathematicae} 38, no. 1 (1976/77), 1--16.

\bibitem{JS81}
Jacquet, H., and J.~A. Shalika.
``On {E}uler products and the classification of automorphic forms. {II}''
\textit{American Journal of Mathematics} 103, no. 4 (1981), 777--815. 

\bibitem{Ki03}
Kim, H.~H.
``Functoriality for the exterior square of {${\rm GL}_4$} and the
  symmetric fourth of {${\rm GL}_2$}.''
\textit{Journal of the American Mathematical Society} 16, no. 1 (2003), 139--183 (electronic).
With appendix 1 by Ramakrishnan, D. and appendix 2 by Kim, H.~H. and P. Sarnak.

\bibitem{KS00}
Kim, H.~H., and F. Shahidi.
``Functorial products for {$\rm GL_2\times GL_3$} and functorial
  symmetric cube for {$\rm GL_2$}.''
\textit{Comptes Rendus de l'Acad\'emie des Sciences. S\'erie I.
              Math\'ematique} 331, no. 8 (2000), 599--604.

\bibitem{KS02}
Kim, H.~H., and F. Shahidi.
``Cuspidality of symmetric powers with applications.''
\textit{Duke Math. Journal} 112, no. 1 (2002), 177--197.

\bibitem{La80}
Langlands, R.~P.
\textit{Base change for {${\rm GL}(2)$}}, vol.~96 of \textit{Annals of Mathematics Studies}.
Princeton University Press, Princeton, N.J., 1980.

\bibitem{LRS99}
Luo, W., Z. Rudnick and P. Sarnak.
``On the generalized {R}amanujan conjecture for {${\rm GL}(n)$}.''
In \textit{Automorphic forms, automorphic representations, and
  arithmetic ({F}ort {W}orth, {TX}, 1996)}, vol.~66 of \textit{Proc. Sympos. Pure
  Math.} Amer. Math. Soc., Providence, RI, 1999, pp.~301--310.

\bibitem{PR12}
Prasad, D., and D. Ramakrishnan.
``On the cuspidality criterion for the Asai transfer to GL(4).''
{A}ppendix to: ``Determination of cusp forms on {$GL(2)$} by coefficients restricted to quadratic subfields.''
In \textit{Journal of Number Theory} 132, no. 6 (2012), 1359--1384.

\bibitem{Ra94}
Ramakrishnan, D.
``A refinement of the strong multiplicity one theorem for {${\rm
  GL}(2)$}. {A}ppendix to: ``{$l$}-adic representations associated to modular
  forms over imaginary quadratic fields. {II}'' [Inventiones Mathematicae\ 116
  (1994), no.\ 1-3, 619--643; {MR}1253207 (95h:11050a)] by {R}. {T}aylor.
\textit{Inventiones Mathematicae} 116, no. 1-3 (1994), 645--649.

\bibitem{Ra97}
Ramakrishnan, D.
``On the coefficients of cusp forms.''
\textit{Mathematical Research Letters} 4, no. 2-3 (1997), 295--307.

\bibitem{Ra00}
Ramakrishnan, D.
``Modularity of the {R}ankin-{S}elberg {$L$}-series, and multiplicity
  one for {${\rm SL}(2)$}.''
\textit{Annals of Mathematics} 152, no. 1 (2000), 45--111.

\bibitem{RW04}
Ramakrishnan, D., and S. Wang
``A cuspidality criterion for the functorial product on {$\rm GL(2)\times GL(3)$} with a cohomological application.''
\textit{International Mathematics Research Notices} 27, no. 27 (2004), 1355--1394.

\bibitem{Ra09}
Ramakrishnan, D.
``An exercise concerning the selfdual cusp forms on {${\rm GL}(3)$}.''
preprint.

\bibitem{Se81}
Serre, J.-P.
``Quelques applications du th\'eor\`eme de densit\'e de {C}hebotarev.''
\textit{Institut des Hautes \'Etudes Scientifiques. Publications
              Math\'ematiques}, 54 (1981), 323--401.

\bibitem{Tu81}
Tunnell, J.
``Artin's conjecture for representations of octahedral type.''
\textit{American Mathematical Society. Bulletin. New Series} 5, no. 2 (1981), 173--175.

\bibitem{NW2}
Walji, N.
``Further refinement of strong multiplicity one for {${\rm GL}(2)$}.''
\textit{Transactions of the American Mathematical Society}, 366 (2014), 4987--5007.

\end{thebibliography}
%\bibliographystyle{amsalpha}

\end{document}